\documentclass[10pt]{amsart}
\oddsidemargin=8pt
\evensidemargin=8pt
\textheight=582pt
\textwidth=450pt

\usepackage{fancyhdr}
\usepackage{stmaryrd}
\usepackage{calrsfs}

\pagestyle{fancy}
\fancyhf{}

\fancyhead[CE]{\fontsize{9}{11}\selectfont V. GONZALEZ, F. GOTTI, AND I. PANPALIYA}
\fancyhead[CO]{\fontsize{9}{11}\selectfont ON THE ASCENT OF ALMOST AND QUASI-ATOMICITY TO MONOID SEMIDOMAINS}
\fancyhead[LE,RO]{\thepage}


\usepackage{amsmath}
\usepackage[nospace,noadjust]{cite}
\usepackage{amsfonts}
\usepackage{amssymb,enumerate}
\usepackage{amsthm}
\usepackage{cite}
\usepackage{comment}
\usepackage{color}
\usepackage[all]{xy}
\usepackage{hyperref}
\usepackage{lineno}
\usepackage{tikz-cd}


\newtheorem{theorem}{Theorem}[section]
\newtheorem{prop}[theorem]{Proposition}
\newtheorem{question}[theorem]{Question}

\newtheorem{lemma}[theorem]{Lemma}
\newtheorem{cor}[theorem]{Corollary}
\newtheorem*{main-theorem}{Main Theorem}

\theoremstyle{definition}

\newtheorem{example}[theorem]{Example}
\numberwithin{equation}{section}


\newcommand{\ff}{\mathbb{F}}
\newcommand{\nn}{\mathbb{N}}
\newcommand{\pp}{\mathbb{P}}
\newcommand{\qq}{\mathbb{Q}}
\newcommand{\rr}{\mathbb{R}}
\newcommand{\zz}{\mathbb{Z}}

\newcommand{\uu}{\mathcal{U}}

\providecommand\ldb{\llbracket}
\providecommand\rdb{\rrbracket}

\hypersetup{
	pdftitle={FD},
	colorlinks=true,
	linkcolor=blue,
	citecolor=cyan,
	urlcolor=wine
}

\keywords{monoid domain, monoid semidomain, almost atomicity, quasi-atomicity, atomic domain, integral domain, semiring}

\subjclass[2020]{Primary: 13F15, 13A05, 20M25; Secondary: 06F05, 11Y05, 13G05}



\begin{document}
	
	\mbox{}
	\title{On the ascent of almost and quasi-atomicity \\ to monoid semidomains}
	
	\author{Victor Gonzalez}
	\address{Department of Mathematics\\Miami Dade College\\Miami, FL 33135}
	\email{vmichelg@mdc.edu}
	
	\author{Felix Gotti}
	\address{Department of Mathematics\\MIT\\Cambridge, MA 02139}
	\email{fgotti@mit.edu}
	
	\author{Ishan Panpaliya}
	\address{Department of Mathematics\\Seattle University\\Seattle, WA 98105}
	\email{panpaliya.ishan@gmail.com}
	
\date{\today}

\begin{abstract}	 
	 A commutative monoid is atomic if every non-invertible element factors into irreducibles (also called atoms), while an integral (semi)domain is atomic if its multiplicative monoid is atomic. Notions weaker than atomicity have been introduced and studied during the past decade, including almost atomicity and quasi-atomicity, which were coined and first investigated by Boynton and Coykendall in their study of graphs of divisibility of integral domains. The ascent of atomicity to polynomial extensions was settled by Roitman back in 1993 while the ascent of atomicity to monoid domains was settled by Coykendall and the second author in 2019 (in both cases the answer was negative). The main purpose of this paper is to study the ascent of almost atomicity and quasi-atomicity to polynomial extensions and monoid domains. Under certain reasonable conditions, we establish the ascent of both properties to polynomial extensions (over semidomains). Then we construct an explicit example illustrating that, with no extra conditions, quasi-atomicity does not ascend to polynomial extensions. Finally, we show that, in general, neither almost atomicity nor quasi-atomicity ascend to monoid domains, improving upon a construction first provided by Coykendall and the second author for the non-ascent of atomicity.
\end{abstract}
\medskip

\maketitle


\bigskip
\section{Introduction}
\label{sec:intro}
\smallskip

It is well known that the unique factorization property (i.e., the statement of the Fundamental Theorem of Arithmetic (FTA)) ascends from any integral domain to its polynomial extension, that is, if an integral domain $R$ is a UFD so is $R[x]$. The first version of this result, often referred to as Gauss's lemma, was established by Gauss~\cite{cfG1801} back in 1801, and it is fundamental in the context of commutative ring theory. As the property of unique factorization, the property of being Noetherian (i.e., satisfying the ACC on ideals) ascends to polynomial extensions, and this fact is known as Hilbert's basis theorem and was proved by Hilbert~\cite{dH1890} back in 1890. In general, establishing the ascent of certain algebraic/arithmetic property to polynomial extensions allows us to identify a new class of commutative rings satisfying the given property. This was a fundamental part of Noether's classical approach to commutative ring theory.
\smallskip

The study of factorizations goes back to the nineteenth century as the same was crucial to better understand and approach fundamental problems in number theory, and it was essential in the development of what we call today algebraic number theory and commutative ring theory. One of the first relevant works on factorizations is present in Kummer's development of the theory of ideal numbers, which was motivated by the failure of certain cyclotomic extensions $\zz[\zeta_p]$ (here $\zeta_p$ is a $p$-th root of unity) to be UFDs \cite{eK1847,eK1851}. Based on Lam\'e's approach (which wrongly assumes that $\zz[\zeta_p]$ is a UFD for each prime~$p$), Kummer was able to use his theory of ideal numbers to prove the statement of Fermat's Last Theorem for a significantly large class of prime exponents. This was an essential step in the development of algebraic number theory. In addition, Kummer's study of ideal numbers significantly influenced Dedekind's pioneering work on ideal theory, turning the notion of ideal numbers into the more sophisticated notion of ideals (as we know them today) and giving initial shape to the formal algebraic machinery we currently know as commutative ring theory.
\smallskip

However, it was not until 1990 that factorizations were studied in the more general setting of integral domains. This study was initiated in~\cite{AAZ90}, where Anderson, Anderson, and Zafrullah introduced the classes of bounded factorization domains (BFD) and finite factorization domains (FFD). The notion of an FFD is a natural generalization of that of a UFD, while the notion of a BFD is a natural generalization of that of an FFD. In the same paper, the authors studied the classes of BFDs and FFDs in connection with fundamental number-theoretical notions, including the IDF property (i.e., the existence of irreducible divisors) and the property of atomicity (i.e., the existential condition in the statement of the FTA) and also in connection with some classic ring-theoretical properties, including the Krull/Noetherian properties and the ACCP (i.e., the ACC on principal ideals). In~\cite{AAZ90}, the authors not only proved that both the bounded/finite factorization property ascends to polynomial extensions but also highlighted the ascent of atomicity \cite[Question~1]{AAZ90} as well as the ascent of the IDF property \cite[Question~2]{AAZ90} as open questions in their paper (the ascent of the Krull property to polynomial extensions had already been proved in~\cite[Theorem~1]{lC81}, while the ascent of the ACCP can be easily verified). Recent surveys on factorizations can be found in~\cite{AG22} for the classical setting of integral domains and in~\cite{GZ20} for the more general setting of commutative monoids.
\smallskip

Although we already mentioned that the property of atomicity refers to the existential statement in the FTA, a more formal and general definition goes as follows: a cancellative and commutative monoid is called atomic if every non-invertible element factors into finitely many atoms (i.e., irreducible elements), while an integral domain is atomic if its multiplicative monoid is atomic. Atomicity has been largely studied in connection with the ACCP (see \cite{pC68,aG74} and the more recent papers \cite{CGH23,GL23}). The ascent of atomicity was first pointed out as an open problem by Gilmer~\cite[page 189]{rG84} (he, along with Parker, had already proved that the ACCP does not ascend, in general, from pairs $(M,R)$ consisting of a torsion-free monoid and an integral domain to their monoid domain $R[M]$ \cite[Corollary~7.14]{GP74}). The ascent of atomicity to polynomial rings is the special version of this question when $M$ is a rank-one free commutative monoid (i.e., an isomorphic copy of $\nn_0$). The ascent of atomicity to polynomial rings, proposed in~\cite[Question~1]{AAZ90}, was answered by Roitman~\cite{mR93}, who provided a technical but systematic construction of atomic domains having certain desirable algebraic/divisibility properties, which can be manipulated to achieve non-atomicity in their corresponding polynomial extensions (the non-ascent of atomicity to power series ring is also due to Roitman~\cite{mR00}). A recent survey on atomicity in the setting of integral domains can be found in~\cite{CG24}.
\smallskip

The main purpose of this paper is to study the potential ascent of almost and quasi-atomicity to both polynomial semidomains and monoid semidomains. Almost and quasi-atomicity are two natural generalizations of atomicity introduced by Boynton and Coykendall in 2015 in their study of graphs of divisibility of integral domains. In a cancellative and commutative monoid $M$, we say that an element is atomic if it is invertible or factors into finitely many atoms, and then we say that the monoid $M$ is almost atomic (resp., quasi-atomic) if for each $b \in M$ there exists an atomic element $a \in M$ (resp., an element $a \in M$) such that $ab$ is atomic in~$M$. Following Boynton and Coykendall~\cite{BC15}, we say that an integral domain is almost atomic (resp., quasi-atomic) provided that its multiplicative monoid is almost atomic (resp., quasi-atomic). The properties of almost atomicity and quasi-atomicity have also been considered by Lebowitz-Lockard~\cite{nL19}, where he provided various interesting constructions of non-atomic domains satisfying at least one of such properties.
\smallskip

In Section~\ref{sec:background}, in an attempt to make this paper as self-contained as possible, we briefly revise the terminology, notation, and main known results that we will use throughout the rest of the paper.
\smallskip

In Section~\ref{sec:monoid semidomains}, we study the ascent of atomicity in the general setting of monoid semidomains. First, we argue that, for any monoid $M$ that satisfies the ACCP, if a semidomain $S$ is almost atomic (resp., quasi-atomic), then the monoid semidomain $S[M]$ is also almost atomic (resp., quasi-atomic). The special case $M = \nn_0$ was addressed by Polo and the second author in~\cite{GP23}, where they proved that the existence of certain maximal common divisors in $S$ is a sufficient condition for the ascent of both almost atomicity \cite[Theorem~4.2]{GP23} and quasi-atomicity \cite[Theorem~5.3]{GP23}. The same condition was first proposed and proved to be sufficient by Roitman for the ascent of atomicity to polynomial extensions~\cite{mR93}. The second part of this section is devoted to enlarge the classes of known almost atomic domains and quasi-atomic domains by considering ring extensions of the form $S[x] + F[x]x^2$, where $S$ is an integral domain and $F$ is a field containing $S$. Our work is motivated by the prototypical examples $\zz[x] + \qq[x]x^2$ and $\zz[x] + \rr[x]x^2$ Lebowitz-Lockard discussed in~\cite[Section~2]{nL19} as an example of an almost atomic domain that is not atomic and a quasi-atomic domain that is not almost atomic, respectively.
\smallskip

In Section~\ref{sec:polynomial rings}, our main purpose is to establish the non-ascent of quasi-atomicity to polynomial extensions, which is a result parallel to the non-ascent of atomicity to polynomial extensions already provided in~\cite{mR93}. As mentioned in the previous paragraph, the subring $R := \zz[x] + \rr[x]x^2$ of $\rr[x]$ is a quasi-atomic domain that is not almost atomic. Although $R$ is a quasi-atomic domain, we will prove that its polynomial extension $R[x]$ is not quasi-atomic, obtaining that the property of being quasi-atomic does not ascend to polynomial extensions. We should emphasize that we were unable to answer whether the property of almost atomicity ascends to polynomial extensions and, in this direction, we emphasize the corresponding open question to motivate readers to complement or extend the results we have obtained here.
\smallskip

In Section~\ref{sec:monoid domains}, we prove that for any prime $p$ neither the property of almost atomicity nor that of quasi-atomicity ascend from monoids to their corresponding monoid domains over the finite field~$\ff_p$. In the recent paper~\cite{GR25}, Rabinovitz and the second author constructed a rank-one torsion-free atomic monoid $M$ such that for any integral domain $R$ the monoid domain $R[M]$ is not atomic. The first result in this direction, establishing the non-ascent of atomicity to monoid domains, was given by Coykendall and the second author in~\cite{CG19}: for each prime $p$, they constructed a finite-rank (indeed, rank at most~$2$) torsion-free atomic monoid $M$ whose monoid domain $\ff_p[M]$ is not atomic. We conclude this paper improving and generalizing the same result. The extent to which we generalize this result allows us to provide a negative answer to the ascent of almost atomicity and quasi-atomicity to monoid domains over fields. Although we reuse crucial ideas introduced in \cite[Section~5]{CG19}, the construction we present here contains significant deviations from some of the arguments presented in~\cite{CG19}, and we hope such deviations are useful to tackle the ascent of further algebraic/arithmetic properties to monoid algebras.

\bigskip
\section{Background}
\label{sec:background}

\smallskip
\subsection{General Notation}
\smallskip

Following usual conventions, we let $\zz$, $\qq$, and $\rr$ denote the set of integers, rational numbers, and real numbers, respectively. In addition, we let $\pp$, $\nn$, and $\nn_0$ denote the set of primes, positive integers, and nonnegative integers, respectively. Then, for each $p \in \pp$, we let $v_p \colon \qq^\times \to \zz$ denote the standard $p$-adic valuation map. For $b,c \in \zz$ with $b \le c$, we let $\ldb b, c \rdb$ stand for the discrete interval from $b$ to $c$:
\[
	\ldb b,c \rdb := \{n \in \zz : b \le n \le c\}.
\]
Also, for $S \subseteq \rr$ and $r \in \rr$, we set $S_{\ge r} := \{s \in S : s \ge r\}$, and we define $S_{> r}$, $S_{\le r}$ and $S_{< r}$ similarly. For each $q \in \qq_{>0}$, we let $\mathsf{n}(q)$ and $\mathsf{d}(q)$ be the unique relatively prime positive integers $n$ and $d$ such that the equality $q = \frac nd$ holds.

\medskip
\subsection{Commutative Monoids}

Although a monoid is usually defined as a semigroup with an identity element, in the scope of this paper, the term \emph{monoid} refers to a cancellative and commutative semigroup with an identity element. Let $M$ be a monoid (multiplicatively written). The group consisting of all invertible elements of $M$ is denoted by $\uu(M)$ and the elements in $\uu(M)$ are called \emph{units}. When $\uu(M)$ is the trivial group, we say that $M$ is \emph{reduced}. The \emph{Grothendieck group} of $M$, denoted by $\mathcal{G}(M)$, is the abelian group (unique up to isomorphism) that satisfies the following property: if an abelian group contains an isomorphic copy of $M$, then it must contain an isomorphic copy of $\mathcal{G}(M)$. The rank of a monoid is the rank of its Grothendieck group as a $\zz$-module. A monoid is called \emph{torsion-free} provided that its Grothendieck group is torsion-free. Every rank-$1$ torsion-free abelian group can be realized as an additive subgroup of $\qq$ \cite[Section~24]{lF70}, and so it follows from \cite[Theorem~2.9]{rG84} that every rank-$1$ torsion-free monoid can be realized as an additive submonoid of $\qq_{\ge 0}$.
\smallskip

An element $a \in M \! \setminus \! \uu(M)$ is called an \emph{atom} (or an \emph{irreducible element}) if, for all $b,c \in M$, the equality $a = bc$ implies that $\{b,c\}$ intersects $\uu(M)$. We let $\mathcal{A}(M)$ denote the set consisting of all the atoms of $M$. More generally, an element $a \in M$ is called \emph{atomic} if either $a \in \uu(M)$ or $a$ factors into finitely many atoms. Following Cohn~\cite{pC68}, we say that $M$ is \emph{atomic} if every element of $M$ is atomic. We say that an element $b \in M$ is \emph{almost atomic} (resp., \emph{quasi-atomic}) if we can take an atomic element (resp., an element) $a \in M$ such that $ab$ is atomic. Clearly, an atomic element is almost atomic, whereas an almost atomic element is quasi-atomic. Following Boynton and Coykendall~\cite{BC15}, we say that a monoid is \emph{almost atomic} (resp., \emph{quasi-atomic}) if all its elements are almost atomic (resp., quasi-atomic). Almost atomicity and quasi-atomicity are the most relevant algebraic notions in the scope of this paper.
\smallskip

A subset $I$ of $M$ is called an \emph{ideal} if $IM := \{bc : b \in I \text{ and } c \in M\}$ is a subset of $I$, and we say that an ideal $I$ is \emph{principal} if $I = bM := \{bm : m \in M\}$ for some $b \in M$. An element $b_0 \in M$ satisfies the \emph{ascending chain condition on principal ideals} (ACCP) if every ascending chain of principal ideals \emph{starting} at $b_0$ stabilizes, which means that if $(b_nM)_{n \ge 0}$ is an ascending chain consisting of principal ideals of $M$, then there exists $n_0 \in \nn$ such that $b_n M = b_{n_0} M$ when $n \ge n_0$. Accordingly, we say that~$M$ satisfies the \emph{ACCP} if every element of $M$ satisfies the ACCP. If a monoid satisfies the ACCP, then one can readily check that it is atomic. For any $b,c \in M$, we say that $b$ \emph{divides} $c$ and write $b \mid_M c$ if $c \in bM$ or, equivalently, $cM \subseteq bM$. A submonoid $N$ of $M$ is called \emph{divisor-closed} if the only pairs $(b,c) \in M \times N$ with $b \mid_M c$ are those in $N \times N$. Let $S$ be a nonempty subset of $M$. A \emph{common divisor} of~$S$ is an element $d \in M$ such that $d \mid_M s$ for all $s \in S$, while a common divisor $d \in M$ of $S$ is a \emph{maximal common divisor} (MCD) if the only common divisors of the set $\big\{\frac{s}{d} : s \in S \big\}$ are the units of~$M$. The monoid $M$ is called an \emph{MCD monoid} if each nonempty finite subset of~$M$ has an MCD. 
\smallskip

In this last paragraph, it is convenient to assume that $M$ is written additively, and we do so. We say that $M$ is \emph{linearly ordered} with respect to an order relation $\preceq$ on $M$ provided that for all $b,c,d \in M$ the order relation $b \prec c$ guarantees $b+d \prec c+d$. Then we say that $M$ is \emph{linearly orderable} if $M$ is a linearly ordered monoid with respect to some total order relation on~$M$. It is due to Levi~\cite{fL13} that every torsion-free abelian group is linearly orderable. Since monoids here are assumed to be cancellative and commutative, this implies that a monoid is linearly orderable if and only if it is torsion-free. Another consequence of the mentioned Levi's result is that if $M$ is reduced and torsion-free, then its Grothendieck group $\mathcal{G}(M)$ can be turn into a linearly ordered abelian group with respect to an order relation $\preceq$ such that $M$ is contained in the nonnegative cone of $\mathcal{G}(M)$, which means that $M \subseteq \{g \in \mathcal{G}(M) : 0 \preceq g \}$. An additive submonoid of $\qq$ consisting of nonnegative rational is called a \emph{Puiseux monoid}. The atomic structure and arithmetic of factorizations of Puiseux monoids and their monoid domains have been actively investigated for almost a decade (see~\cite{CJMM24,GG24,fG19,GL23} and references therein). Certain Puiseux monoids play a crucial role in the main construction of Section~\ref{sec:monoid domains}.

\medskip
\subsection{Commutative Semirings}

A set $S$ endowed with two binary operations `$+$' and~`$\cdot$', called \emph{addition} and \emph{multiplication}, respectively, is called a \emph{commutative semiring} if $(S,+)$ is a monoid, $(S, \cdot)$ is a commutative semigroup, and they are compatible via the distributive law: $(b+c) \cdot d = b \cdot d + c \cdot d$ for all $b, c, d \in S$. As every algebraic structure we deal with in the scope of this paper is commutative, from now on we refer to any commutative semiring simply as a semiring. The additive identity element of a semiring is denoted by $0$, and we tacitly assume from now on that any semiring we mention here has a multiplicative identity, which we denote by $1$. Let $S$ be a semiring. For any $b,c \in S$, we often write $b c$ instead of $b \cdot c$. A subset $S'$ of $S$ is called a \emph{subsemiring} of~$S$ provided that $(S',+)$ is a submonoid of $(S,+)$ that is closed under multiplication and contains~$1$. For semirings $S$ and $T$, a map $\phi \colon S \to T$ is called a \emph{semiring homomorphism} if $\phi$ is a monoid homomorphism from $(S,+)$ to $(T, +)$ such that $\phi(1) = 1$ and $\phi(bc) = \phi(b)\phi(c)$ for all $b,c \in S$. Semiring isomorphisms are defined in the obvious way.
\smallskip

Given that $(S,+)$ is cancellative and commutative, its Grothendieck group $\mathcal{G}(S)$ is an (additive) abelian group: $S$ is called a \emph{semidomain} if its multiplication can be extended to $\mathcal{G}(S)$ turning $\mathcal{G}(S)$ into an integral domain. It is clear that any integral domain is a semidomain, and it is not hard to show that~$S$ is a semidomain if and only if $S$ is isomorphic to a subsemiring of an integral domain \cite[Lemma~2.1]{GP23}. Let $S$ be a semidomain. The subset consisting of all the nonzero elements of $S$ is clearly a monoid under multiplication, which is called the \emph{multiplicative monoid} of $S$ and denoted by $S^*$. The \emph{group of units} of $S$, denoted by $S^\times$, is the group of units of~$S^*$. We say that an element of $S$ is an \emph{atom} (resp., an \emph{atomic element}) if it is an atom (resp., an atomic element) of $S^*$, and we let $\mathcal{A}(S)$ denote the set of atoms of $S$ (of course, $\mathcal{A}(S) = \mathcal{A}(S^*)$). We say that the semidomain $S$ is \emph{atomic} (resp., \emph{almost atomic}, \emph{quasi-atomic}) if its multiplicative monoid $S^*$ is atomic (resp., almost atomic, quasi-atomic). For $b,c \in S$, if $b$ divides $c$ in $S^*$, then we write $b \mid_S c$ instead of $b \mid_{S^*} c$. 
\smallskip

\medskip
\subsection{Monoid Semidomains} 

Let $S$ be a semidomain, and let $M$ be a torsion-free monoid. The \emph{monoid semidomain} of~$M$ over $S$, denoted by $S[x;M]$ is defined to be the semiring consisting of all formal polynomial expressions with exponents in $M$ and coefficients in~$S$ (under polynomial-like addition and multiplication). We refer to elements of $S$ as \emph{constant} (\emph{polynomial expressions}). We write $S[M]$ instead of $S[x;M]$ when we see no risk of ambiguity. The most relevant special cases of monoid semidomains are the following:
\begin{itemize}
	\item $M = \nn_0$, in which case, $S[\nn_0]$ is denoted by $S[x]$ and called the \emph{polynomial semidomain} or the \emph{polynomial extension} of $S$, and
	\smallskip
	
	\item $S$ is an integral domain, in which case, $S[M]$ is called the \emph{monoid domain} of~$M$ over~$S$. 
\end{itemize}
Several results on the ascent of sub-atomic and arithmetic properties to polynomial semidomains were recently investigated in~\cite{GP23} and \cite{GP23a}, respectively (almost atomicity and quasi-atomicity are two of the sub-atomic properties considered in~\cite{GP23}). On the other hand, the ascent of atomic and factorization properties to monoid domains has been the subject of a great deal of study in recent years (see, for instance, \cite{CG19,GR25,BGLZ24,MO09}).
\smallskip

Let $R$ denote the Grothendieck group of $(S,+)$, and assume that $R$ is an integral domain containing~$S$ as a subsemiring (we can do so because $S$ is a semidomain). It is clear that $S[M]$ is a subsemiring of $R[M]$, which is an integral domain, as it is well known that the monoid ring of a torsion-free monoid over an integral domain is an integral domain \cite[Theorem~8.1]{rG84}. Therefore $S[M]$ is a semidomain. Also, $S[M]$ admits only trivial units as one can readily verify that
\begin{equation} \label{eq:group of units of a semidomain}
	S[M]^\times = \{ t x^u : t \in S^\times \text{ and } u \in \uu(M)\},
\end{equation}
which is well known in the special case of monoid domains \cite[Theorem~11.1]{rG84}. Because $S[M]$ is a subsemiring of $R[M]$, any nonzero polynomial expression $f(x)$ of $S[M]$ can be written in the following way:
\begin{equation} \label{eq:generic element in a monoid algebra}
	f(x) = c_1 x^{q_1} + \dots + c_n x^{q_n},
\end{equation}
where $c_1, \dots, c_n \in S$ are nonzero coefficients and $q_1, \dots, q_n \in M$ are pairwise distinct exponents. Then the \emph{support} of $f(x)$, denoted by $\text{supp} \, f(x)$, is the set of exponents $\{q_1, \dots, q_n\}$, which is uniquely determined by $f(x)$. We say that $f(x)$ is a \emph{monomial} if $\text{supp} \, f(x)$ is a singleton. 
\smallskip
\smallskip

As $M$ is torsion-free, it must be linearly orderable. Fix an order relation~$\preceq$ on $M$ turning~$M$ into a linearly ordered monoid. Then the support of $f(x)$ is also linearly ordered, and so we can assume that $q_1 \prec \dots \prec q_n$. As for standard polynomials, we can now define the notions of order and degree of a nonzero polynomial expression in $S[M]$: we call $\text{ord} \, f(x) := q_1$ and $\deg f(x) := q_n$ the \emph{order} and the \emph{degree} of $f(x)$, respectively. Finally, we call $c_n$ (resp., $c_n x^{q_n}$) the \emph{leading coefficient} (resp., \emph{leading term}) of the polynomial expression $f(x)$, while we call $c_1$ (resp., $c_1 x^{q_1}$) the \emph{order coefficient} (resp., \emph{order term}) of $f(x)$.

\bigskip
\section{Monoid Semidomains}
\label{sec:monoid semidomains}

The purpose of this section is twofold. On the one hand, we provide conditions under which the properties of almost atomicity and quasi-atomicity ascend to monoid semidomains. On the other hand, we enlarge the classes of known almost atomic and quasi-atomic semidomains.

\medskip
\subsection{Ascent of Almost and Quasi-Atomicity} 

We start by establishing a sufficient condition for the ascent of almost atomicity and quasi-atomicity to monoid semidomains: $S[M]$ is almost atomic (resp., quasi-atomic) for any pair $(S,M)$ consisting of an almost atomic (resp., quasi-atomic) semidomain $S$ and a torsion-free reduced monoid~$M$ satisfying the ACCP.
\smallskip

Let $M$ be a torsion-free monoid, and let $S$ be a semidomain. We let $S^*x^{M}$ denote the monoid consisting of all the nonzero monomials of $S[M]$:
\[
    S^*x^{M} := \{sx^q : s \in S^* \text{ and } q \in M\}.
\]
Before proving the main theorem of this section, we need to argue the following lemma.

\begin{lemma}
    Let $M$ be a torsion-free monoid, and let $S$ be a semidomain. Then the following statements hold.
    \begin{enumerate}
        \item If $M$ is reduced, then $S^*$ is a divisor-closed submonoid of $S[M]^*$, and so a divisor-closed submonoid of $S^*x^{M}$.
        \smallskip
        
        \item $S^*x^{M}$ is a divisor-closed submonoid of $S[M]^*$.
    \end{enumerate}
\end{lemma}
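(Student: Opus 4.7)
The plan is to embed $S[M]$ into the ambient integral domain $R[M]$, where $R$ denotes the Grothendieck group of $(S,+)$. Since $S$ is a semidomain, $R$ is an integral domain containing $S$ as a subsemiring, and because $M$ is torsion-free, $R[M]$ is an integral domain by \cite[Theorem~8.1]{rG84}. The torsion-freeness of $M$ also makes it linearly orderable, so I would fix a total order $\preceq$ on $M$ compatible with the monoid operation; together with the integrality of $R[M]$ this yields the multiplicativity identities $\ord(fg) = \ord f + \ord g$ and $\deg(fg) = \deg f + \deg g$ for all nonzero $f, g \in R[M]$, along with the obvious inequality $\ord h \preceq \deg h$.

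To establish part~(2), I would take $sx^q \in S^*x^{M}$ and $f \in S[M]^*$ with $fg = sx^q$ for some $g \in S[M]^*$. Since $sx^q$ is a monomial, $\ord(sx^q) = \deg(sx^q) = q$, and the multiplicativity identities combine to give
\[
	(\deg f - \ord f) + (\deg g - \ord g) = 0
\]
inside the linearly ordered abelian group $\mathcal{G}(M)$. Both summands are $\succeq 0$, so each must vanish, forcing $\text{supp}\, f$ and $\text{supp}\, g$ to be singletons; in particular $f \in S^*x^{M}$, as needed. For part~(1), I would reuse (2): given $s \in S^*$ and $fg = s$ in $S[M]^*$, part~(2) yields $f = cx^q$ and $g = dx^r$ with $c,d \in S^*$ and $q, r \in M$, whence the equation $fg = s$ forces $q + r = 0_M$, so $q \in \uu(M)$. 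The reducedness of $M$ collapses $\uu(M)$ to $\{0_M\}$, so $q = 0_M$ and $f = c \in S^*$. The final clause of~(1) is automatic: divisibility inside $S^*x^{M}$ is a restriction of divisibility inside $S[M]^*$, so the divisor-closedness of $S^*$ passes from the larger ambient monoid down to the intermediate one.

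There is no real obstacle in this argument: the only delicate point is justifying the multiplicativity of degree and order, which requires both the ambient integral domain $R[M]$ (so that leading and order coefficients cannot cancel under multiplication) and a compatible linear order on $M$ (so that degree and order are well-defined at all). Both ingredients are supplied by the standing hypotheses that $S$ is a semidomain and $M$ is torsion-free, so the proof reduces to the clean order-of-magnitude comparisons described above.
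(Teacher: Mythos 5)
Your proposal is correct and takes essentially the same route as the paper: the paper's proof treats part~(2) as immediate from the fact that every divisor of a nonzero monomial in $S[M]$ is a nonzero monomial, and gets part~(1) from the triviality of $S[M]^\times$ when $M$ is reduced, while your $\mathrm{ord}$/$\deg$ multiplicativity argument in the ambient domain $R[M]$ is exactly the verification of these facts. The only difference is one of detail: you spell out the compatible linear order and the degree/order bookkeeping, and you deduce (1) from (2) together with $\uu(M)=\{0\}$ rather than citing the description of $S[M]^\times$, which amounts to the same underlying computation.
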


\begin{proof}
    (1) It is clear that $S^*$ is a submonoid of both $S^* x^M$ and $S[M]^*$. Also, if $M$ is a reduced monoid, then it follows from~\eqref{eq:group of units of a semidomain} that $S[M]^\times = S^\times$ and so we immediately obtain that $S^*$ is a divisor-closed submonoid of both $S[M]^*$ and $S^*x^M$. 
    \smallskip
    
    (2) It follows immediately because every divisor of a nonzero monomial in $S[M]$ must be a nonzero monomial.
\end{proof}

It will be convenient at this point to generalize the notion of indecomposable polynomial given by Roitman in~\cite{mR93}. Let $R$ be a subring of $S[M]$, and let $f(x)$ be a nonzero polynomial expression in $R$. Then we say that $f(x)$ is \emph{indecomposable} in $R$ provided that it is not the product of two nonconstant polynomial expressions. We are in a position to establish the main result of this section.

\begin{theorem} \label{thm:ascent of AA/QA to monoid semidomains}
    Let $M$ be a reduced monoid satisfying the ACCP, and let $S$ be a semidomain such that the set of nonzero coefficients of each indecomposable polynomial expression in $S[M]$ has an MCD in $S$. Then the following statements hold.
    \begin{enumerate}
        \item If $S$ is almost atomic, then $S[M]$ is almost atomic.
        \smallskip

        \item If $S$ is quasi-atomic, then $S[M]$ is quasi-atomic.
    \end{enumerate}
\end{theorem}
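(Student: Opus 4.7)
The plan is to combine the preceding lemma with a Roitman-style ``indecomposable factorization'' of $f(x)$, and then reduce both parts to the hypothesized property of $S$. Part~(1) of the lemma (using that $M$ is reduced) gives that $S^*$ is divisor-closed in $S[M]^*$, so any factorization of an element of $S$ into atoms of $S$ remains a factorization into atoms of $S[M]$; in particular, an atomic element of $S$ stays atomic in $S[M]$. This immediately handles the case in which $f(x) \in S^*$ is constant. For nonconstant $f(x) \in S[M]^*$, I aim to produce a decomposition
\[
    f(x) \, = \, D \cdot q_1(x) \cdots q_n(x)
\]
with $D \in S^*$ and $q_1(x), \dots, q_n(x) \in \mathcal{A}(S[M])$. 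Once this is in hand, either hypothesis applied to $D$ produces $s \in S^*$ (atomic in $S$ for~(1), arbitrary for~(2)) such that $sD$ is atomic in $S$, and then $s \cdot f(x) = (sD) \cdot q_1(x) \cdots q_n(x)$ is a product of atomic elements of $S[M]$, hence atomic; for~(1), $s$ itself is also atomic in $S[M]$ by the transfer above.

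To produce the decomposition, I would first split $f(x)$ recursively into nonconstant factors until each remaining piece is indecomposable, yielding $f(x) = p_1(x) \cdots p_n(x)$ with each $p_i(x)$ a nonconstant indecomposable polynomial expression in $S[M]$. The MCD hypothesis then lets me write $p_i(x) = d_i q_i(x)$ with $d_i \in S^*$ an MCD of the coefficients of $p_i(x)$ and $q_i(x)$ \emph{primitive}, meaning that the MCD of its coefficients is a unit. The key observation is that every such $q_i(x)$ is an atom of $S[M]$: by~\eqref{eq:group of units of a semidomain} and the reducedness of $M$, we have $S[M]^\times = S^\times$, so in any factorization $q_i(x) = h_1(x) h_2(x)$ with both factors nonunits, either both are nonconstant --- forcing $p_i(x) = d_i h_1(x) h_2(x)$ to decompose nontrivially, contradicting indecomposability of $p_i(x)$ --- or one of them is a constant $c \in S^* \setminus S^\times$, which divides every coefficient of $q_i(x)$ and hence their MCD, forcing $c \in S^\times$, a contradiction. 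Setting $D := d_1 \cdots d_n$ yields the required decomposition.

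The main obstacle is the termination of the recursive splitting, and this is precisely where the reducedness and the ACCP of $M$ are both needed. Fix a linear order $\preceq$ on $M$ compatible with its operation, so that the degree map $\deg \colon S[M]^* \to M$ is additive: $\deg(gh) = \deg g + \deg h$. If the recursion did not terminate, then since the splitting tree is binary, K\"onig's lemma would produce an infinite chain $f_0 = f, f_1, f_2, \dots$ in which each $f_{i+1}$ is a nonconstant factor of $f_i$ whose cofactor $g_i$ is also nonconstant. Writing $\deg f_i = \deg f_{i+1} + \deg g_i$ with $\deg g_i \in M \setminus \{0\}$, the reducedness of $M$ rules out the reverse divisibility $\deg f_{i+1} \in \deg f_i + M$: any relation $\deg f_{i+1} = \deg f_i + s$ with $s \in M$ would combine with the previous equation to give $s + \deg g_i = 0$ in $\mathcal{G}(M)$, forcing $\deg g_i = 0$ by reducedness. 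Therefore $(\deg f_i + M)_{i \ge 0}$ is a strictly ascending chain of principal ideals of $M$, contradicting the ACCP on $M$ and completing the argument.
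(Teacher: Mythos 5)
Your argument is correct and follows the same skeleton as the paper's proof: factor $f(x)$ into nonconstant indecomposable polynomial expressions, pull out an MCD of the coefficients of each one, observe (exactly as the paper does) that the resulting primitive indecomposables are atoms of $S[M]$ because $S[M]^\times = S^\times$ when $M$ is reduced, and then apply almost (resp., quasi-) atomicity of $S$ to the extracted constant, using the divisor-closedness of $S^*$ in $S[M]^*$ to transfer atomicity. The one genuine difference is how the finiteness step is justified. The paper chooses a factorization of $f(x)$ with the maximal possible number $\ell$ of nonconstant factors, asserting that the ACCP on $M$ bounds this number; you instead split recursively and prove termination via K\"onig's lemma, showing that an infinite nested chain of nonconstant factors would yield a strictly ascending chain of principal ideals $(\deg f_i + M)_{i \ge 0}$ of $M$, contradicting the ACCP. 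Your version is arguably the more robust treatment of this step: the ACCP alone does not bound the number of nonconstant factors when $M$ is not a bounded factorization monoid (in suitable Puiseux monoids a single monomial can be written as a product of arbitrarily many nonconstant monomials), whereas your chain argument only invokes the ACCP along one chain and still delivers the needed factorization into indecomposables. One small point to tighten: for strictness of your chain you need each cofactor to satisfy $\deg g_i \neq 0$, and ``nonconstant'' guarantees this only if the compatible total order is chosen so that $M$ lies in the nonnegative cone of $\mathcal{G}(M)$ --- which is possible precisely because $M$ is reduced and torsion-free, as recorded in the background section; with an arbitrary compatible order a nonconstant polynomial expression could have degree $0$. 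With that choice of order made explicit, your proof is complete.
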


\begin{proof}
    (1) Fix a nonzero polynomial expression $f(x)$ in $S[M]^*$. Now consider the set $L$ consisting of all indices $\ell \in \nn$ such that we can write $f(x) = g_1(x) \dots g_\ell(x)$ for some nonconstant polynomial expressions $g_1(x), \dots, g_\ell(x) \in S[M]$. The fact that $M$ satisfies the ACCP guarantees that the set $L$ is bounded. Therefore we can assume that $\ell$ is the largest index in~$L$. The maximality of $\ell$ immediately implies that the polynomial expressions $g_1(x), \dots, g_\ell(x)$ are indecomposable. As a consequence, for each $i \in \ldb 1,\ell \rdb$, we can guarantee the existence of a maximal common divisor $d_i \in S$ of the set of coefficients of the polynomial expression $g_i(x)$. Then we can write
	\begin{equation} \label{eq:factoring f}
		f(x) = d a_1(x) \cdots a_\ell(x),
	\end{equation}
	where $d := d_1 \cdots d_\ell$ and $a_i(x) := \frac{g_i(x)}{d_i} \in S[M]^*$ for every $i \in \ldb 1,\ell \rdb$. As the polynomial expressions $g_1(x), \dots, g_\ell(x)$ are indecomposable in $S[M]$, so are $a_1(x), \dots, a_\ell(x)$. This, along with the observation that, for any $i \in \ldb 1, \ell \rdb$, the only common divisors of the set of nonzero coefficients of $a_i(x)$ in $S^*$ are the units of $S$, ensures that $a_i(x)$ is an atom in $S[M]$. In addition, as $S^*$ is an almost atomic monoid, we can take an atomic element $s \in S^*$ such that $sd$ is also an atomic element of $S^*$. In fact, both $s$ and $sd$ are  atomic elements in $S[M]^*$ as we have seen earlier that $S^*$ is a divisor-closed submonoid of $S[M]^*$. This, together with the fact that $a_1(x), \dots, a_\ell(x)$ are atoms of $S[M]$, implies that $f(x)$ is an almost atomic element of $S[M]$: indeed, $sf(x) = (sd) a_1(x) \cdots a_\ell(x)$. Hence $S[M]$ is an almost atomic semidomain.
    \smallskip

    (2) Assume that $S$ is quasi-atomic, and fix a nonzero polynomial expression $f(x)$ in $S[M]$. Using the fact that $M$ satisfies the ACCP, we can proceed as we did in part (a) to write $f(x) = d a_1(x) \dots a_\ell(x)$, where $d$ is an element of $S$ and $a_1(x), \dots, a_\ell(x)$ are atoms of $S[M]$. Now the fact that $S$ is quasi-atomic allows us to pick $s \in S^*$ such that $sd$ is an atomic element of $S$, which in turn implies that $sd$ is an atomic element of $S[M]$ because $S^*$ is a divisor-closed submonoid of $S[M]^*$. Since $sd$ and $a_1(x), \dots, a_\ell(x)$ are all atomic elements in $S[M]$, the equality $sf(x) = (sd)a_1(x) \dots a_\ell(x)$ ensures that $f(x)$ is a quasi-atomic element of $S[M]$. Hence we conclude that $S[M]$ is a quasi-atomic semidomain.
\end{proof}

In the statement of Theorem~\ref{thm:ascent of AA/QA to monoid semidomains}, one of the hypotheses is that the exponent monoid $M$ satisfies the ACCP, which is a condition that may not seem sharp enough \emph{a priori}. As we will illustrate in the following example, such condition is the best one can hope: indeed, Section~\ref{sec:monoid domains} is devoted to the construction of a class $\{M_p : p \in \pp\}$ consisting of rank-one torsion-free reduced atomic monoids such that none of the monoid domains in the set $\{\ff_p[M_p] : p \in \pp \}$ is quasi-atomic. In the following example we provide a brief outline of the construction that motivated our work in Section~\ref{sec:monoid domains}.


\begin{example} \cite[Section~5]{CG19}
    Let $\zz\big[\frac12\big]_{\ge 0} \subseteq M \subseteq \zz\big[\frac12, \frac13\big]_{\ge 0}$ be an extension of Puiseux monoids, which are rank-one torsion-free monoids, and assume that the intermediate monoid $M$ is atomic. A monoid satisfying these conditions was explicitly constructed in \cite[Proposition~5.1]{CG19}. Observe that $M$ does not satisfy the ACCP because $\big(\frac1{2^n} + M \big)_{n \ge 1}$ is an ascending chain of principal ideals of $M$ that does not stabilize. Thus, $M$ is a reduced atomic monoid that does not satisfy the ACCP. On the other hand, as $\ff_2$ is a field, it trivially follows that the set of nonzero coefficients of each indecomposable polynomial in $\ff_2[M]$ has an MCD in $\ff_2$. Finally, as we shall see in the proof of Theorem~\ref{thm:non-ascent of QA in characteristic p}, although $\ff_2$ is trivially almost atomic (and so quasi-atomic), the monoid domain $\ff_2[M]$ is not even quasi-atomic.
\end{example}

\medskip
\subsection{Another Construction of Almost and Quasi-Atomic Domains}

The sufficient condition for the ascent of atomicity given in the previous section allows us to enlarge the class of known almost atomic and quasi-atomic semidomains. We proceed to identify further classes of almost atomic and quasi-atomic domains inside polynomial rings. Our new setting consists of a tower of integral domains $S \subseteq F \subseteq K$, where $F$ is the quotient field of $R$ and $K$ is a field extension of $F$. Our target integral domains are the subrings
\begin{equation} \label{eq:subring of polynomias}
    S[x] + F[x]x^2 \quad \text{and} \quad S[x] + K[x]x^2
\end{equation}
of the polynomial domain $K[x]$. Observe that $S^*$ is a divisor-closed submonoid of the multiplicative monoids of both $S[x] + F[x]x^2$ and $S[x] + K[x]x^2$. Before providing sufficient conditions for the integral domains in~\eqref{eq:subring of polynomias} to be almost atomic and quasi-atomic, respectively, it is convenient to argue the following lemma, which describes the multiplicative submonoid consisting of all atomic elements of $S[x] + K[x]x^2$.

\begin{lemma} \label{lem:atomic elements of S[x]+F[x]x^2}
    Let $K$ be a field, and let $S$ be a subring of $K$ that satisfies the ACCP. For a nonzero polynomial $f(x) \in S[x] + K[x]x^2$, the following statements hold.
    \begin{enumerate}
        \item $f(x)$ is atomic if its order coefficient belongs to $S$.
        \smallskip

        \item If $S$ is not a field, then $f(x)$ is atomic if and only if its order coefficient belongs to $S$.
    \end{enumerate}
\end{lemma}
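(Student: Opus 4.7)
The plan is to prove (1) by induction on $\deg f$, and to derive (2) from (1) by a direct analysis of the atoms of $R := S[x] + K[x]x^2$. Throughout I will use that $R^\times = S^\times$ (its units are exactly the units of $S$ viewed as constants, since $R \cap K = S$), that $S^*$ is a divisor-closed submonoid of $R^*$ so atoms of $S$ are atoms of $R$, and that $x$ is itself an atom of $R$ by a degree count. Since $S$ satisfies ACCP, $S$ is atomic.

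For (1), the base case $\deg f = 0$ is immediate: $f \in S^*$ is atomic. For the inductive step, set $n := \ord f$, so the order coefficient $c_n$ lies in $S$. If $n \geq 2$, I check that $f/x \in R$: its constant term is $0$ and its $x$-coefficient is either $0$ (for $n \geq 3$) or $c_n$ (for $n = 2$), both in $S$. Since $\deg(f/x) < \deg f$ and $f/x$ still has order coefficient $c_n \in S$, the inductive hypothesis gives $f/x$ atomic; as $x$ is an atom, $f = x \cdot (f/x)$ is atomic.

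The real work lies in the cases $n \in \{0,1\}$, where powers of $x$ cannot generally be peeled off. Here my plan is a constant-extraction process: as long as some non-unit $s \in S^*$ divides in $S$ both the constant and the $x$-coefficient of the current polynomial $f_i$, I set $f_{i+1} := f_i/s \in R$ and continue. The order coefficient of $f_i$ then strictly descends under divisibility in $S$, producing an ascending chain of principal ideals of $S$ that stabilizes by ACCP. This yields $f = s_1 \cdots s_N \cdot \tilde{f}$, where each $s_i \in S^*$ is a non-unit (and so a product of atoms of $S$, hence of atoms of $R$) and $\tilde{f}$ admits no such non-unit common divisor. Either $\tilde{f}$ is an atom of $R$ and we are done, or any nontrivial factorization $\tilde{f} = PQ$ must have $\deg P, \deg Q \geq 1$: a constant factor in $S \setminus S^\times$ would, by comparing constant and $x$-coefficients, yield a non-unit common divisor of precisely the type ruled out by termination. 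In that case $\deg P, \deg Q < \deg f$, and since $\ord P, \ord Q \leq 1$ their order coefficients lie in $S$ automatically, so the inductive hypothesis applies. This case is the main obstacle, and it is the single place where the ACCP hypothesis on $S$ does essential work.

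For (2), the direction $(\Leftarrow)$ is (1). For $(\Rightarrow)$, I will show that every atom $q$ of $R$ has order coefficient in $S$. The case $\ord q \leq 1$ is automatic. If $\ord q \geq 2$, then $q \in K[x]x^2$, and because $S$ is not a field I may pick a non-unit $s \in S^*$; then $q = s \cdot (q/s)$ with $q/s \in K[x]x^2 \subseteq R$ non-constant (hence non-unit, since $R^\times = S^\times$), contradicting atomicity of $q$. Consequently, an atomic $f$ factors as a unit in $S^\times$ times atoms of $R$, each contributing an order coefficient in $S$, so the order coefficient of $f$ lies in $S$.
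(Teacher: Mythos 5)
Your proposal is correct and runs on the same engine as the paper's proof: peel off the atom $x$ to reduce to order at most $1$, use the ACCP of $S$ on the ascending chain of principal ideals generated by successive order coefficients to strip non-unit constant divisors, and prove (2) by noting that any element of order at least $2$ factors nontrivially through a non-unit of $S$, so every atom has its order coefficient in $S$ and order coefficients multiply. The only difference is bookkeeping — you organize (1) as a strong induction on degree with a single split into two nonconstant factors, while the paper fixes a factorization into a maximal number of nonconstant (hence indecomposable) factors and strips constants from each factor — so the content is essentially identical.
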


\begin{proof}
    Set $R := S[x] + K[x]x^2$. Since $S^*$ is a divisor-closed submonoid of $R^*$, the units of $R$ are precisely the units of $S$, that is, $R^\times = S^\times$.
    \smallskip
    
    (1) Assume that the order coefficient of $f(x)$ belongs to $S$, and let us show that $f(x)$ is atomic. Towards this, we first observe that, since the set of divisors of $x$ in $R$ is $S^\times \cup S^\times x$, the monomial $x$ must be an atom. This, along with the fact that $S$ is atomic (because $S$ satisfies the ACCP), ensures that every monomial of the form $sx^n$, where $s \in S^*$ and $n \in \nn_0$ is an atomic element of $R$. Thus, if $\text{ord} \, f(x) \ge 2$, then we can replace $f(x)$ by $f(x)/x^{\max\{0, -1+\text{ord} \, f(x) \} }$ to guarantee $\text{ord} \, f(x) \in \{0,1\}$, which we assume for the rest of this paragraph. It is clear that the set consisting of all $\ell \in \nn$ such that $f(x)$ can be written as the product of $\ell$ nonconstant polynomials in $R$ is bounded. Hence we can write $f(x) = g_1(x) \dots g_\ell(x)$ for some nonconstant polynomials $g_1(x), \dots, g_\ell(x) \in R$, assuming that $\ell$ was taken as large as it can possibly be. This immediately implies that $g_i(x)$ is indecomposable for every $i \in \ldb 1, \ell \rdb$. In addition, for each $i \in \ldb 1,\ell \rdb$, the fact that $\text{ord} \, g_i(x) \le \text{ord} \, f(x) \le 1$ ensures that the order coefficient of $g_i(x)$ belongs to $S$, and so the fact that $S$ satisfies the ACCP guarantees that every ascending chain of principal ideals of $S$ starting at the order coefficient of $g_i(x)$ must stabilize. Hence, for each $i \in \ldb 1, \ell \rdb$, we can take $d_i \in S^*$ such that $\frac{g_i(x)}{d_i}$ is an atom in $R$. As $f(x) = (d_1 \cdots d_\ell) \prod_{i=1}^\ell \frac{g_i(x)}{d_i}$, the fact that $S$ is atomic implies that $f(x)$ is an atomic element.
    \smallskip

    (2) Now assume that $S$ is not a field, which implies that $S^* \setminus S^\times$ is a nonempty set. We have seen in the previous part that if the order coefficient of $f(x)$ belongs to $S$, then $f(x)$ is atomic. Conversely, assume that $f(x)$ is atomic. If $\text{ord} \, f(x) \in \{0,1\}$, then the order coefficient of $f(x)$ belongs to $S$ by the definition of~$R$. Assume therefore that $\text{ord} \, f(x) \ge 2$. Write $f(x) = a_1(x) \cdots a_\ell(x)$, where $a_1(x), \dots, a_\ell(x)$ are atoms in $R$. Observe that $\text{ord} \, a_i(x) \in \{0,1\}$ for every $i \in \ldb 1, \ell \rdb$: indeed, if $\text{ord} \, a_i(x) \ge 2$ for some $i \in \ldb 1,\ell \rdb$, then one could take any element $s \in S^* \setminus S^\times$ to non-trivially factor $a_i(x)$ in $R$ as $s \frac{a_i(x)}s$, contradicting that $a_i(x)$ is an atom in $R$. As the order of every factor on the right-hand side of $f(x) = a_1(x) \cdots a_\ell(x)$ belongs to $\{0,1\}$, their order coefficients must belong to $S$, which implies that the order coefficient of $f(x)$ belongs to $S$.
\end{proof}

We proceed to characterize when the integral domain $S[x] + F[x]x^2$ and $S[x] + K[x]x^2$ are almost atomic and quasi-atomic, respectively. We would like to emphasizing that the first prototypical example of the form~\eqref{eq:subring of polynomias} were provided by Lebowitz-Lockard in~\cite{nL19}.

\begin{prop} \label{prop:classes of AA/QA domains}
    Let $S$ be an integral domain that satisfies the ACCP whose quotient field is $F$, and let $F \subseteq K$ be an extension field. Then the following statements hold.
    \begin{enumerate}
        \item $F[x] + K[x]x^2$ satisfies the ACCP and so is atomic.
        \smallskip
        
        \item $S[x] + F[x]x^2$ is almost atomic.
        \smallskip

        \item $S[x] + K[x]x^2$ is quasi-atomic. 
    \end{enumerate}
\end{prop}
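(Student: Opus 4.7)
The plan is to handle each of the three parts in turn. Parts (2) and (3) reduce quickly to Lemma~\ref{lem:atomic elements of S[x]+F[x]x^2}, which characterizes atomicity in terms of membership of the order coefficient in $S$, while part (1) requires an independent ACCP argument. Throughout parts (2) and (3) I will use the fact, noted before Lemma~\ref{lem:atomic elements of S[x]+F[x]x^2}, that $S^*$ is a divisor-closed submonoid of the multiplicative monoid of each of the rings under consideration.

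For part (1), I would set $R := F[x] + K[x]x^2$ and take an ascending chain $(f_nR)_{n\ge 0}$ of principal ideals of $R$, so that $f_{n+1}$ divides $f_n$ in $R$ and hence in $K[x]$. Writing $f_n = f_{n+1}g_n$ inside $K[x]$, both $(\deg f_n)_{n\ge 0}$ and $(\mathrm{ord}\,f_n)_{n\ge 0}$ are non-increasing sequences of nonnegative integers, and so both eventually stabilize. From that index on, $g_n$ has degree and order zero in $K[x]$, hence is a nonzero element of $K$. The key point is that a constant polynomial belongs to $R$ only if its constant term lies in $F$; therefore $g_n \in F^\times \subseteq R^\times$, which forces $f_nR = f_{n+1}R$. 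This shows that $R$ satisfies the ACCP, and so $R$ is atomic.

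For parts (2) and (3), given a nonzero $f$ in the relevant ring with order coefficient $c$, the task is to produce a suitable multiplier whose product with $f$ has order coefficient in $S$, after which Lemma~\ref{lem:atomic elements of S[x]+F[x]x^2}(1) certifies atomicity. In part (2), where $R := S[x] + F[x]x^2$ and $c \in F$, I would write $c = s/t$ with $s, t \in S^*$ (using that $F$ is the quotient field of $S$); then $tf$ has order coefficient $s \in S$ and so is atomic in $R$, while $t$ is atomic in $S$ by the ACCP and remains atomic in $R$ by divisor-closedness, whence $f$ is almost atomic. In part (3), where $R := S[x] + K[x]x^2$ and $c \in K$ is nonzero, I would simply take $g := c^{-1}x^2 \in K[x]x^2 \subseteq R$, for which $fg$ has order coefficient $1 \in S$, and conclude that $f$ is quasi-atomic. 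The main subtlety in the whole argument lies in part (1), where the observation that constants in $R$ must have their constant term in $F$ rather than merely in $K$ is what forces the stabilized quotient to be a unit; parts (2) and (3) are then essentially immediate given Lemma~\ref{lem:atomic elements of S[x]+F[x]x^2}.
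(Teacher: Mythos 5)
Your proposal is correct and follows essentially the same route as the paper: parts (2) and (3) are exactly the paper's argument via part~(1) of Lemma~\ref{lem:atomic elements of S[x]+F[x]x^2} together with the divisor-closedness of $S^*$, producing the multipliers $t$ and $c^{-1}x^2$, respectively. For part (1) the paper simply observes that $K[x]^\times \cap R = F^\times = R^\times$ and cites the standard ACCP transfer result \cite[Proposition~2.1]{aG74}, whereas you prove that transfer by hand with a degree-stabilization argument resting on the same key observation that constants of $R$ lie in $F$; this is a harmless unpacking of the citation rather than a different approach.
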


\begin{proof}
    (1) Set $R := F[x] + K[x]x^2$. Clearly, $R^\times = F^\times$, and so $K[x]^\times \cap R = K^\times \cap R = F^\times = R^\times$. This, along with the fact that $K[x]$ satisfies the ACCP (indeed, it is a UFD), implies that $R$ satisfies the ACCP \cite[Proposition~2.1]{aG74}.
    \smallskip
    
    (2) Set $R := S[x] + F[x]x^2$, and let us show that $R$ is almost atomic. Fix a nonzero polynomial $f(x)$ in $R$, and let $q$ be the order coefficient of $f(x)$. Since $q$ is a nonzero element of $F$, which is the quotient field of $S$, we can take a nonzero $s \in S$ such that $sq \in S$. Since $S$ satisfies the ACCP, the element $s$ is atomic in $S$, and so it is also atomic in $R$ because $S^*$ is a divisor-closed submonoid of $R^*$. On the other hand, $sf(x)$ is a polynomial in $R$ whose order coefficient belongs to $S$, and so $sf(x)$ is atomic in $R$ by virtue of part~(1) of Lemma~\ref{lem:atomic elements of S[x]+F[x]x^2}. Hence $R$ is almost atomic.
    \smallskip
    
    (3) Now set $R := S[x] + K[x]x^2$, and let us show that $R$ is quasi-atomic. Let $f(x)$ be a nonzero polynomial in $R$, and let $k \in K$ denote the order coefficient of $f(x)$. Since $K$ is a field, we know that the monomial $k^{-1}x^2$ belongs to $R$. Set $g(x) := (k^{-1}x^2) f(x)$. Then $g(x)$ is a polynomial in $K[x]$ whose order is at least~$2$, whence $g(x) \in R$. In addition, the order coefficient of $g(x)$ is $1$. Thus, it follows from part~(1) of Lemma~\ref{lem:atomic elements of S[x]+F[x]x^2} that $g(x)$ is atomic in $R$, and so $f(x)$ is a quasi-atomic element in $R$. Hence we conclude that $R$ is a quasi-atomic domain. 
\end{proof}

We can now use Proposition~\ref{prop:classes of AA/QA domains} to produce examples of almost atomic (resp., quasi-atomic) domains that are not atomic (resp., almost atomic).

\begin{example} \label{ex:AA/QA not QA/AA}
	Let $S$ be an integral domain that satisfies the ACCP but is not a field, and let $F$ be the quotient field of $S$.
    \begin{enumerate}
        \item In light of part~(2) of Proposition~\ref{prop:classes of AA/QA domains}, the integral domain $R := S[x] + F[x]x^2$ is almost atomic. However, $R$ is not atomic because no monomial of degree $2$ in $R$ is an atomic element: indeed, after fixing any $q \in F^\times$, one can take $a \in \mathcal{A}(S)$ (such an element $a$ must exits because $S$ is atomic but not a field) and non-trivially factor $qx^2$ as $a \big(\frac{q}a x^2 \big)$. Thus, $R$ is an almost atomic domain that is not atomic. The special case $\zz[x] + \qq[x]x^2$ was first discussed in~\cite[Section~2]{nL19}.
        \smallskip

        \item Let $K$ be an extension field of $F$ such that $F \subsetneq K$, and now set $R := S[x] + K[x]x^2$. It follows from part~(3) of Proposition~\ref{prop:classes of AA/QA domains} that $R$ is a quasi-atomic domain. Now fix $\kappa \in K \setminus F$, and consider the monomial $\kappa x^2$, which is an element of $R$. In light of part~(1) of Lemma~\ref{lem:atomic elements of S[x]+F[x]x^2}, the order coefficient of each atomic element of $R$ belongs to $S$, whence $f(x)(\kappa x^2)$ cannot be atomic in $R$ for any atomic element $f(x)$ in $R$. Thus, $R$ is a quasi-atomic domain that is not almost atomic. The special case $\zz[x] + \rr[x]x^2$ was first discussed in~\cite[Section~2]{nL19}.
    \end{enumerate}
\end{example}

\bigskip
\section{Polynomial Extensions}
\label{sec:polynomial rings}

As mentioned in the introduction, the property of being atomic does not ascend to polynomial extensions, and this is due to Roitman~\cite{mR93}. Motivated by this result, we argue in this section a similar result for quasi-atomicity. In order to prove that quasi-atomicity does not ascend to polynomial extensions, we will prove, in the next theorem, that the polynomial domain of
\begin{equation} \label{eq:QA domain not AA}
	R := \mathbb{Z} + \mathbb{Z}x + x^2\mathbb{R}[x]
\end{equation}
is not quasi-atomic (we have already seen in Example~\ref{ex:AA/QA not QA/AA} that $R$ is a quasi-atomic domain). It suffices to argue that the polynomial extension of $R$ is not quasi-atomic, and we will do so in the next theorem.
\smallskip

Before proving the main theorem of this section, we introduce some notation that will be helpful. Let $R$ be a commutative ring with identity. For a nonzero polynomial $f \in R[x,y]$, we can write $f_y(x) = \sum_{i=0}^m b_i(y) x^i \in R[x]$ (where $b_0(y), \dots, b_m(y) \in \rr[y]$) or $f_x(y) = \sum_{i=0}^n c_i(x) y^i \in R[y]$ (where $c_0(x), \dots, c_n(x) \in \rr[x]$): we call $\deg f_y$ and $\deg f_x$ the $x$-\emph{degree} and the $y$-\emph{degree} of $f$, respectively.

\begin{theorem}
	Let $R$ be the integral domain in~\eqref{eq:QA domain not AA}, and let $y$ be an (algebraically) independent variable over $R$. Then the polynomial extension $R[y]$ is not quasi-atomic.
\end{theorem}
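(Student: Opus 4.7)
The plan is to exhibit an explicit element of $R[y]$ that fails to be quasi-atomic. The candidate I would use is
\[
    h(x,y) := x^2 + \pi x^2 y = x^2(1 + \pi y),
\]
where $\pi$ denotes any real number transcendental over $\mathbb{Q}$; it is the transcendence of $\pi$ that drives the argument. Note that $h \in R[y]$ because $\pi x^2 \in x^2 \mathbb{R}[x] \subseteq R$. Since $R[y]$ sits inside the UFD $\mathbb{R}[x,y]$ and $1 + \pi y$ is irreducible there, the strategy is the following: if $f h$ were atomic in $R[y]$ for some $f \in R[y]$, with atom factorization $f h = p_1 \cdots p_n$, then unique factorization in $\mathbb{R}[x,y]$ would force some atom $p_i$ to be divisible by $1 + \pi y$ in $\mathbb{R}[x,y]$, and I would then show that no atom of $R[y]$ can be.

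The central lemma to prove is: no atom of $R[y]$ is divisible by $1 + \pi y$ in $\mathbb{R}[x,y]$. Assume towards a contradiction that $p \in R[y]$ is such an atom, and write $p = \sum_i \alpha_i(x) y^i$ with $\alpha_i \in R$. The defining shape of $R$ gives $\alpha_i(0), \alpha_i'(0) \in \mathbb{Z}$, so both $p(0,y)$ and $(\partial_x p)(0,y)$ lie in $\mathbb{Z}[y]$. On the other hand, since $1 + \pi y$ does not involve $x$, divisibility of $p$ by $1 + \pi y$ in $\mathbb{R}[x,y]$ propagates to $\partial_x p$, and evaluation at $x = 0$ then yields two integer polynomials in $y$ that both vanish at $y = -1/\pi$. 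Transcendence of $-1/\pi$ over $\mathbb{Q}$ forces both integer polynomials to be identically zero, so $\alpha_i(0) = \alpha_i'(0) = 0$ for every $i$.

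At that stage, each $\alpha_i$ lies in $x^2 \mathbb{R}[x]$, hence $p = x^2 \tilde p$ for some $\tilde p \in \mathbb{R}[x,y]$. Crucially, this means that scalar division by any nonzero integer $n$ preserves $R[y]$, because the coefficients of $p/n$ in $y$ are real multiples of $x^2$, which belong to $R$. Taking $n = 2$, the identity $p = 2 \cdot (p/2)$ factors $p$ into two non-units of $R[y]$, since $2$ is not a unit and $p/2$ is not a unit (one has $R[y]^\times = \{\pm 1\}$ while $p$ is non-constant, being divisible in $\mathbb{R}[x,y]$ by the non-constant polynomial $1 + \pi y$). This contradicts that $p$ was an atom, establishing the lemma and hence the theorem.

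The main obstacle is pinning down the right invariant of $h$: the lemma must rule out \emph{every} possible atom absorbing $1 + \pi y$, not merely monomial-like candidates, and for this it is essential to exploit \emph{both} the $x^0$ and the $x^1$ pieces of $R$ in order to produce two integer polynomials annihilated at the transcendental number $-1/\pi$. Once those two pieces of information cooperate, the infinite descent by integer scalars closes the argument cleanly.
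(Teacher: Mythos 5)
Your proposal is correct, and while it follows the same global strategy as the paper---pass to the UFD $\mathbb{R}[x,y]$, note that the chosen element has exactly one irreducible factor of $y$-degree one, and show that no atom of $R[y]$ can absorb that factor---the mechanism behind the key step is genuinely different and substantially shorter. The paper works with $\alpha x^2 y + \beta x^2$ for two algebraically independent reals and then runs a delicate induction on the $y$-coefficients $T_i(x)$ of the cofactor (its Claims 1 and 2), eventually producing a nonzero rational polynomial vanishing at $\alpha/\beta$, contradicting algebraic independence. You instead prove the clean lemma that no atom of $R[y]$ is divisible by $1+\pi y$ in $\mathbb{R}[x,y]$: since $R = \mathbb{Z} + \mathbb{Z}x + x^2\mathbb{R}[x]$, the two ``integral coordinates'' of an element of $R[y]$ are its value and its $x$-derivative at $x=0$, and because $1+\pi y$ is $x$-free, divisibility passes to $\partial_x p$ and evaluation at $x=0$ yields two polynomials in $\mathbb{Z}[y]$ vanishing at the transcendental $-1/\pi$; both must vanish identically, so all $y$-coefficients of $p$ lie in $x^2\mathbb{R}[x]$, and then $p = 2\cdot\frac{p}{2}$ is a factorization into two nonunits of $R[y]$ (using $R[y]^\times = \{\pm 1\}$), contradicting irreducibility. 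Interestingly, the paper uses this same ``divide by $2$'' observation, but only after its long coefficient analysis; your argument reaches it in one stroke, needs a single transcendental number rather than two algebraically independent ones, and would in fact also dispose of the paper's element $x^2(\alpha y + \beta)$. The only point worth adding explicitly is the trivial case in the definition of atomic: if $f h$ were a unit then $h$ would be a unit of $R[y]$, which is impossible since $R[y]^\times = \{\pm 1\}$; with that one line included, your argument is complete.
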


\begin{proof}
	Let $\alpha$ and $\beta$ be two (distinct) positive irrational numbers such that $\alpha, \beta$ are algebraically independent over $\qq$. Now consider the following two elements of $R$:
	\[
		a := \alpha x^2 \quad \text{ and } \quad b := \beta x^2.
	\]
	We are done once we prove that the linear polynomial $ay + b$ is not quasi-atomic in $R[y]$. Assume, towards a contradiction, that $ay + b$ is quasi-atomic. Thus, there exists a nonzero polynomial $F(y) \in R[y]$ such that $F(y)(ay + b)$ factors into irreducibles in $R[y]$. Write
	\begin{equation}
		F(y)(ay + b) = \prod_{i=1}^n A_i(y),
	\end{equation}
	where $A_1(y), \dots, A_n(y)$ are irreducibles in $R[y]$. As $0 \notin \{a,b\}$, after simplifying powers of $y$ on both sides of the previous equality, we can assume that $A_i(y)$ is not associate with $y$ for any $i \in \ldb 1,n \rdb$. Since $\rr[x,y]$ is an atomic domain (indeed, $\rr[x,y]$ is a UFD), for each $i \in \ldb 1,n \rdb$, there exists $\ell_i \in \nn$ such that
	\begin{equation} \label{eq:quasi-atomic product refined in R[x,y]}
		F(y)(ay + b) = \prod_{i=1}^n \prod_{j=1}^{\ell_i} a_{i,j}(x,y), 
	\end{equation}
	for irreducible polynomials $a_{i,1}(x,y), \dots, a_{i,\ell_i}(x,y)$ in $\rr[x,y]$ such that $A_i(y) = a_{i,1}(x,y) \cdots a_{i,\ell_i}(x,y)$ for every $i \in \ldb 1,n \rdb$. Since~\eqref{eq:quasi-atomic product refined in R[x,y]} takes place in $\rr[x,y]$, which is a UFD, we can write
	\begin{equation} \label{eq:factorization of ay+b}
		ay+b = r \prod_{(i,j) \in S} a_{i,j}(x,y),
	\end{equation}
	for some nonzero $r \in \rr$ and a (nonempty) subset $S$ of $\ldb 1,n \rdb \times \ldb 1, \max_{i} \ell_i \rdb$. Now as $ay + b$ has $y$-degree~$1$, there exists exactly one irreducible factor $a(x,y) := a_{(i_0, j_0)}(x,y)$ on the right-hand side of~\eqref{eq:factorization of ay+b} 
	having $y$-degree $1$ while the rest of such factors have $y$-degree $0$, that is, belong to $\rr[x]$. Thus, we can write $ay + b = a(x,y)B(x)$ for some nonzero polynomial $B(x) \in \rr[x]$. As $a$ and $b$ are both monomials in $\rr[x]$, so is $B(x)$. Write $B(x) = \gamma x^e$ for a nonzero coefficient $\gamma \in \rr$ and an exponent $e \in \ldb 0, 2 \rdb$. 
	Thus, 
	\[
		a(x,y) = \frac{\alpha}{\gamma}x^{2-e}y + \frac{\beta}{\gamma} x^{2-e}.
	\]
	Because $a(x,y)$ is an irreducible factor of $A_{i_0}(y)$ in $\rr[x,y]$, we can write $A_{i_0}(y) = a(x,y) t(x,y)$ for some $t(x,y) \in \rr[x,y]$. Let $d$ be the $y$-degree of $t(x,y)$, and take $T_0(x), \dots, T_d(x) \in \rr[x]$ such that $t(x,y) = \sum_{i=0}^{d} T_i(x)y^i$. Therefore
	\begin{align} \label{eq:A_{i_0} and T_i's}
		A_{i_0}(y) = \bigg( \frac{\alpha}{\gamma} x^{2-e} y + \frac{\beta}{\gamma} x^{2-e} \bigg) \sum_{i=0}^d T_i y^i
						  =  \frac{\alpha}{\gamma} x^{2-e} T_d y^{d+1} + \bigg( \sum_{i=1}^d x^{2-e} \Big( \frac{\beta}{\gamma} T_i + \frac{\alpha}{\gamma} T_{i-1} \Big) y^i \bigg) + \frac{\beta}{\gamma} x^{2-e} T_0.
	\end{align}
	For each index $i \in \ldb 0,d+1\rdb$, we can set $p_i(x) := \frac{\beta}{\gamma}  x^{2-e} T_i(x) + \frac{\alpha}{\gamma}  x^{2-e} T_{i-1}(x)$. After considering $T_{-1}(x)=T_{d+1}(x)=0$, we can write $A_{i_0}(y)$ as follows:
\begin{equation} \label{eq:A_{i_0}}
	A_{i_0}(y) = \sum_{j=0}^{d+1} p_j(x)y^j.
\end{equation}
Observe that $\text{ord} \, T_j(x) < e$ for some $j \in \ldb 0,d \rdb$ as otherwise $2$ and $ \sum_{j=0}^{d+1} \frac{p_j(x)}2 y^j$ would be two nonunits in $R[y]$ and so $A_{i_0}(y) = 2 \big( \sum_{j=0}^{d+1} \frac{p_j(x)}2 y^j \big)$ would be a nontrivial factorization of $A_{i_0}(y)$ in $R[y]$. However, this is not possible because $A_{i_0}(y)$ is irreducible. As a consequence, the set
\[
	D := \{i \in \ldb 0,d  \rdb : \text{ord} \, T_i(x) < e \}
\]
is nonempty. The set of indices $D$ will play a crucial role throughout the rest of this proof.

For each $i \in \ldb 0,d \rdb$, we let $c_i$ denote the order coefficient of $T_i(x)$, letting $c_i=0$ when $T_i(x)$ is the zero polynomial. Because $A_{i_0}(y)$ is an irreducible polynomial in $R[y]$ that is not associate with $y$, its constant coefficient, $\frac{\beta}{\gamma}x^{2-e}T_0(x)$ is nonzero, whence $T_0(x) \neq 0$ and $c_0 \neq 0$. Let us prove the following claim.
	\smallskip

	\noindent \textsc{Claim 1.} For each $i \in \ldb 0, d \rdb$ such that $T_i(x) \neq 0$ and $\text{ord} \, T_i(x) < e$, the following statements hold.
	\begin{itemize}
		\item If $i = 0$, then $\frac{\beta}{\gamma} c_0 \in \zz$. 
		\smallskip
		
		\item If $i \in \ldb 1,d \rdb$ and $T_{i-1} \neq 0$,
		then $\frac{\beta}{\gamma} c_i \in \zz$ when $\text{ord} \, T_i(x) < \text{ord} \, T_{i-1}(x)$, and $\frac{\alpha}{\gamma} c_{i-1} \in \zz$ when $\text{ord} \, T_i(x) > \text{ord} \, T_{i-1}(x)$.
		\smallskip
		
		\item If $i=d$, then $\frac{\alpha}{\gamma} c_d \in \zz$.
	\end{itemize}
	\smallskip
	
	\noindent \textsc{Proof of Claim 1.} Fix $i \in \ldb 0,d \rdb$ such that $\text{ord} \, T_i(x) < e$. First suppose that $i=0$. The polynomial $\frac{\beta}{\gamma}x^{2-e} T_0(x) \in \rr[x]$ belongs to $R$ because it is the constant coefficient of $A_{i_0}(y) \in R[y]$. Also, from the assumption $\text{ord} \, T_0(x) < e$, we obtain that $\text{ord} \, \frac{\beta}{\gamma} x^{2-e}T_0(x) < 2$, which implies that the order coefficient of $\frac{\beta}{\gamma} x^{2-e}T_0(x)$ must be an integer: $\frac{\beta}{\gamma} c_0 \in \zz$. The other cases are similar. Therefore Claim 1 is established.
	\smallskip
	
%
%
	
	Set $s := \min D$, and let us verify the following two inequalities from Claim~1.
	\begin{itemize}
		\item  $d \ge 1$. It suffices to note that if the equality $d=0$ held, then $\text{ord} \, T_0(x) = \text{ord} \, T_d(x)< e$ (we have already seen that $T_0(x) \neq 0$), and so Claim~1 would imply that $\alpha,\beta \in \frac{\gamma}{c_0} \zz$, which is not possible because $\alpha$ and $\beta$ are linearly independent over~$\qq$. 
		\smallskip
		
		\item $s<d$. Suppose otherwise that $s=d$. Therefore it follows from the minimality of $s$ that $\text{ord} \, T_{d-1}(x) \ge e > \text{ord} \, T_d(x)$, whence $\frac{\beta}{\gamma} c_d \in \zz$ by Claim~1. Another application of Claim~1 guarantees that $\frac{\alpha}{\gamma} c_d \in \zz$. As a result, $\alpha,\beta \in \frac{\gamma}{c_d}\zz$. Once again this is not possible because $\alpha$ and $\beta$ are linearly independent over $\qq$.
	\end{itemize}
	\medskip
	
	Let $o_s$ denote the order of the polynomial $T_s(x)$. Our next goal is to establish the following claim, after which we will be in a position to finish our proof.
	\medskip
	
	\noindent \textsc{Claim 2.} For each $k \in \ldb s+1,d \rdb$, there exists a nonconstant polynomial $Q_k(x) \in \qq[x]$ such that $Q_k\big(\frac{\alpha}{\beta}\big)c_s$ is the coefficient of degree $o_s$ of $T_k(x)$.
	\smallskip
	
	\noindent \textsc{Proof of Claim 2.} We proceed by induction on $k$. For the base case, assume that $k=s+1$ and consider the two cases.
	
	First, assume that $s=0$. This means that  the inequality $\text{ord} \, T_0(x) < e$ holds, and so it follows from Claim~1 that $\frac{\beta}{\gamma} c_0 \in \zz$. Therefore we can write $\gamma$ as follows:
	\begin{equation} \label{eq:gamma}
		\gamma = \frac{c_0}{z_0} \beta
	\end{equation}
	for some nonzero $z_0 \in \zz$ (both $\beta$ and $c_0$ are nonzero). We know that $p_1(x) = \frac{\beta}{\gamma} x^{2-e} T_1(x) + \frac{\alpha}{\gamma} x^{2-e} T_0(x) \in R$. Note $T_1(x) \neq 0$ as otherwise $\frac{\alpha}{\gamma}x^{2-e} T_0(x) = p_1(x) \in R$ and so $\text{ord} \, \frac{\alpha}{\gamma}x^{2-e} T_0(x) < 2$ would imply that $\frac{\alpha}{\gamma} c_0 = z_1$ for some $z_1 \in \zz$;
	however, in this case, $\frac{c_0}{z_1} \alpha = \frac{c_0}{z_0} \beta$ by virtue of~\eqref{eq:gamma}, which is not possible because $\alpha$ and $\beta$ are algebraically independent over~$\qq$. In a similar way, we can deduce from Claim~1 that $\text{ord} \, T_1(x) \le \text{ord} \, T_0(x)$. Let $c_{1,o_0}$ denote the coefficient of degree $o_0$ of $T_1(x)$. Since $p_1(x)= \frac{\beta}{\gamma} x^{2-e} T_1(x) + \frac{\alpha}{\gamma} x^{2-e} T_{0}(x) \in R$, we know that the coefficient of degree $\text{ord} \, \frac{\alpha}{\gamma} x^{2-e} T_0(x)$ in $p_1(x)$ must be an integer, which implies that $\frac{\beta}{\gamma}c_{1,o_0}+ \frac{\alpha}{\gamma}c_0=z_1$ for some $z_1 \in \mathbb{Z}$. After rearranging this last equation and substituting (\ref{eq:gamma}), we obtain $c_{1,o_0}=\frac{z_1}{z_0}c_0-\frac{\alpha}{\beta}c_0$. Then, we are done if we take $Q_1(x)=-x+\frac{z_1}{z_0}$.
	Now assume that $s \ge 1$, and write
	\begin{equation} \label{eq:HH}
		p_s(x) = \frac{\beta}{\gamma} x^{2-e} T_s(x) + \frac{\alpha}{\gamma} x^{2-e} T_{s-1}(x) \in R.
	\end{equation}
	 It follows from the minimality of $s$ that $\text{ord} \, T_{s-1}(x) \ge e > \text{ord} \, T_s(x)$, and so Claim~1 guarantees that $\frac{\beta}{\gamma} c_s = z_s$ for some nonzero $z_s \in \zz$. Since $p_{s+1}(x) = \frac{\beta}{\gamma} x^{2-e} T_{s+1}(x) + \frac{\alpha}{\gamma} x^{2-e} T_s(x) \in R$ and $\text{ord} \,  \frac{\alpha}{\gamma} x^{2-e} T_s(x)  < 2$, the coefficient of $p_{s+1}(x)$ of degree $\text{ord} \, \frac{\alpha}{\gamma} x^{2-e} T_s(x)$ belongs to $\zz$, which means that
	\begin{equation} \label{eq:one more coefficient}
		\frac{\beta}{\gamma} c_{s+1,o_s} + \frac{\alpha}{\gamma}c_s = z_{s+1}
	\end{equation}
	for some $z_{s+1} \in \zz$, where $c_{s+1, o_s}$ is the coefficient of degree $o_s$ of the polynomial $T_{s+1}(x)$. After substituting $\frac{\beta}{\gamma} = \frac{z_s}{c_s}$ in~\eqref{eq:one more coefficient} and taking $Q_{s+1}(x) = -x + \frac{z_{s+1}}{z_s} \in \qq[x]$, we see that
	\[
		c_{s+1,o_s} = \Big(\frac{z_{s+1}}{z_s} -  \frac{\alpha}{\beta}\Big)c_s = Q_{s+1}\Big(\frac{\alpha}{\beta}\Big) c_s, 
	\]
	which completes our base case.
	
	To argue our inductive step, we assume that for some $k \in \ldb s+2,d-1 \rdb$ there exists a nonconstant polynomial $Q_{k-1}(x) \in \qq[x]$ such that the $Q_{k-1}\big(\frac{\alpha}{\beta}\big)c_s$ is the coefficient of degree $o_s$ of $T_{k-1}(x)$. As $o_s < e$, 
	the coefficient of degree $2-e+o_s$ of $p_k(x) = \frac{\beta}{\gamma} x^{2-e} T_{k}(x) + \frac{\alpha}{\gamma} x^{2-e} T_{k-1}(x)$ belongs to $\zz$, which means that
	\[
		c_{k, o_s} + \frac{\alpha}{\beta} Q_{k-1}\Big( \frac{\alpha}{\beta}\Big)c_s = Z\frac{\gamma}{\beta}
	\]
	for some $Z \in \zz$. It follows now from the minimality of $s$ that $\text{ord} \, T_{s-1}(x) \ge e > \text{ord} \, T_s(x)$, and so it follows from Claim~1 that $\frac{\beta}{\gamma} c_s = z_s$ for some $z_s \in \zz$. Therefore
	\[
		c_{k, o_s}= \frac{Z}{z_s} c_s - \frac{\alpha}{\beta} Q_{k-1}\Big( \frac{\alpha}{\beta}\Big)c_s = Q_k\Big( \frac{\alpha}{\beta}\Big) c_s
	\]
	for the nonconstant polynomial $Q_k(x) = \frac{Z}{z_s} - xQ_{k-1}(x) \in \qq[x]$. This completes the proof of the inductive step, and so Claim 2 has been established.
	\smallskip
	
	To conclude our proof, let $\ell$ be the maximum index in $\ldb 0,d \rdb$ such that $\text{ord} \, T_\ell(x) < e$, and set $o := \text{ord} \, x^{2-e} T_s(x) < 2$. First, we assume that $\ell < d$.
	Then we can say that $\text{ord} \, x^{2-e}T_\ell(x) < 2$ while $\text{ord} \, x^{2-e} T_j(x) \ge 2$ for every $j \in \ldb \ell+1, d \rdb$. Let  $c_{\ell,o}$ be the degree-$o$ coefficient of the polynomial  $\frac{\alpha}{\gamma} x^{2-e} T_\ell(x)$. Since $\text{ord} \, x^{2-e}T_{\ell + 1} \ge 2$ and $p_{\ell+1}(x) = \frac{\alpha}{\gamma} x^{2-e} T_\ell(x) + \frac{\beta}{\gamma} x^{2-e} T_{\ell+1}(x) \in R$, then $c_{\ell,o} \in \zz$. Also, notice that from $\text{ord} \, T_{s-1} \ge e > \text{ord} \, T_s(x)$, we obtain that $\frac1{\gamma} = \frac{z_s}{c_s} \frac1{\beta}$ for some $z_s \in \zz$. Now we can use Claim~2 to pick  a nonconstant polynomial $Q_\ell(x) \in \qq[x]$ such that $Q_\ell\big(\frac{\alpha}{\beta} \big)c_s$ is the degree-$o$ coefficient of $x^{2-e}T_\ell(x)$, which implies that
	\[
			z_s  \frac{\alpha}{\beta} Q_\ell\Big(\frac{\alpha}{\beta}\Big)  = \frac{z_s}{c_s}  \frac{\alpha}{\beta} Q_\ell\Big(\frac{\alpha}{\beta}\Big)c_s = \frac{\alpha}{\gamma}Q_\ell\Big(\frac{\alpha}{\beta} \Big)c_s = c_{\ell,o}.
	\]
	Therefore $Q(x) := z_s xQ_\ell(x) - c_{\ell,o} \in \qq[x]$ is a nonzero polynomial having $\frac{\alpha}{\beta}$ as a root. However, this contradicts that $\alpha$ and $\beta$ were initially taken algebraically independent. The case $\ell = d$ follows similarly. Therefore we conclude that $R[y]$ is not quasi-atomic.
\end{proof}

We conclude this section with the following related question.

\begin{question}
	Does almost atomicity ascend from any integral domain to its polynomial extension?
\end{question}

\bigskip
\section{Monoid Domains over Finite Fields}
\label{sec:monoid domains}

Our primary goal in this section is to argue that almost atomicity and quasi-atomicity do not ascend to monoid domains over fields. More specifically, we will prove that for any $p \in \pp$, there exists a rank-one torsion-free atomic monoid $M$ such that the monoid domain $\ff_p[M]$ is not quasi-atomic. The strategy we will follow is the one introduced in \cite[Section~5]{CG19}, where a similar result was proved for atomicity in the case when $p=2$.
\smallskip

\begin{theorem}
	For each $p \in \pp$, there exists a rank-one torsion-free atomic monoid $M_p$ such that the monoid algebra $\ff_p[M_p]$ is not even quasi-atomic.
\end{theorem}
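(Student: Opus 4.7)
The plan is to adapt the strategy of \cite[Section~5]{CG19} in two directions simultaneously: first, by generalizing the rank-one torsion-free atomic monoid $M$ used there in the case $p=2$ to a parametric family $\{M_p\}_{p \in \pp}$; and second, by strengthening the non-atomicity conclusion obtained for a distinguished element $b \in \ff_2[M]$ in \cite{CG19} to the stronger non-quasi-atomicity statement for the analogous element in $\ff_p[M_p]$.

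For the construction of $M_p$, I would fix an auxiliary prime $q \neq p$ and build a Puiseux monoid satisfying $\zz\big[\frac{1}{p}\big]_{\ge 0} \subseteq M_p \subseteq \zz\big[\frac{1}{p}, \frac{1}{q}\big]_{\ge 0}$ via an explicit generating set modeled on the one in \cite[Proposition~5.1]{CG19} with $2$ replaced by $p$ and $3$ by $q$. The containment $\zz\big[\frac{1}{p}\big]_{\ge 0} \subseteq M_p$ automatically produces the non-stabilizing chain of principal ideals $\big(\frac{1}{p^n} + M_p\big)_{n \ge 1}$, so $M_p$ will fail the ACCP; atomicity of $M_p$ should follow from a direct adaptation of the combinatorial argument of \cite{CG19}, which relies only on the coprimality of the two primes and not on their specific values.

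Next I would single out the witness $b := 1 + x \in \ff_p[M_p]$ and aim to prove that $fb$ is non-atomic for every nonzero $f \in \ff_p[M_p]$. The key identity, coming from the Frobenius endomorphism in characteristic $p$, is
\[
    \bigl(1 + x^{1/p^n}\bigr)^{p^n} = 1 + x,
\]
which holds inside $\ff_p[M_p]$ for every $n \ge 0$ because $\frac{1}{p^n} \in M_p$. In particular each $1 + x^{1/p^n}$ divides $b$, and hence $fb$, in $\ff_p[M_p]$. Arguing by contradiction, I would assume that $fb = a_1 \cdots a_k$ factors into atoms in $\ff_p[M_p]$ and then: (i) choose $N$ so large that $p^N$ exceeds an explicit length-type bound extracted from a support analysis of $f$ and of the $a_i$'s inside $M_p$; (ii) pass to an overring of $\ff_p[M_p]$ (such as $\ff_p[\qq_{\ge 0}]$) in which the divisibility of $fb$ by $1 + x^{1/p^N}$ is visible, and use the specific generating structure of $M_p$ to pull the resulting quotient back into $\ff_p[M_p]$; (iii) conclude that one of the atoms $a_i$ admits $1 + x^{1/p^{N+1}}$ as a nontrivial factor inside $\ff_p[M_p]$, contradicting its irreducibility.

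The hard part will be step~(ii): descending the Frobenius-type divisibility relation from the ambient polynomial-like overring back to the restricted algebra $\ff_p[M_p]$, while simultaneously controlling the way atoms of $\ff_p[M_p]$ can interact with the infinite tower $\{1 + x^{1/p^n}\}_{n \ge 0}$. Making this analysis \emph{uniform} in $f$—not just true for one specific $f$, as in the CG19 non-atomicity argument—is precisely what promotes the conclusion from non-atomicity to the non-quasi-atomicity needed here, and is where the details of the generating set of $M_p$ will have to be exploited most carefully.
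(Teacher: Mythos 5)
Your overall architecture (atomic Puiseux monoid between $\zz\big[\tfrac1p\big]_{\ge 0}$ and $\zz\big[\tfrac1p,\tfrac1q\big]_{\ge 0}$, Frobenius in characteristic $p$, pigeonhole against a hypothetical atomic factorization of $f\cdot b$) is the right skeleton, and your monoid construction is essentially what the paper does. But there is a decisive gap in the choice of witness: $b = 1+x$ cannot play the role you assign to it. Your step (iii) needs some single atom $a_i$ to absorb a ``deep'' factor $1 + x^{1/p^{N}}$, and the only mechanism available for that is to land in a UFD (say $\ff_p[x]$ after the substitution $x \mapsto x^{(pq)^N}$, under which $1 + x^{1/p^N}$ becomes $1 + x^{q^N}$) and invoke irreducibility of that image. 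However, $1 + x^{q^N}$ is reducible over $\ff_p$ for large $N$ in \emph{every} case: if $q$ is odd it is divisible by $1+x$ (and over $\ff_2$ it even equals $(1+x)(1 + x + \cdots + x^{q^N-1})$), and if $q = 2$ then $x^4+1$, hence $x^{2^N}+1$ for $N \ge 2$, is reducible modulo every prime. So the irreducible factors of your binomial can be scattered among the various atoms $a_i$, no pigeonhole applies, and the contradiction in (iii) never materializes. This is not the ``hard technical step (ii)'' you flagged; it is a failure of the strategy at the point where the witness is chosen.

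The paper's proof fixes exactly this. Instead of $1+x$, it chooses, according to the residue of $p$ modulo $4$, a polynomial $f_d$ of degree at most $2$ and a base $d$ coprime to $p$ such that $f_d\big(x^{d^n}\big)$ is irreducible in $\ff_p[x]$ for \emph{all} $n$: the trinomial $x^2+x+1$ with $d=3$ when $p=2$; the binomial $x-a$ with $a$ a primitive root and $d = p-1$ when $p \equiv 1 \pmod 4$ (via the Lidl--Niederreiter binomial criterion); and the trinomial $x^2 - 2ax - 1$ with $d=2$ when $p \equiv 3 \pmod 4$. The monoid is then taken inside $\zz\big[\tfrac1p, \tfrac1d\big]$ -- so your ``auxiliary prime $q$'' must in fact be coordinated with the witness (it is $d$, not an arbitrary prime coprime to $p$). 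Frobenius enters through $f_d\big(x^{(pd)^n}\big) = f_d\big(x^{d^n}\big)^{p^n}$, irreducibility of $f_d\big(x^{d^n}\big)$ forces one atom $a_j$ to satisfy $a_j\big(x^{(pd)^n}\big) = f_d\big(x^{d^n}\big) G(x)$, and a support/cancellation argument -- which works precisely because $\deg f_d \le 2$ with nonzero constant and low-order coefficients -- shows the rescaled exponents of $G$ lie in the monoid, so $a_j(x) = f_d\big(x^{1/p^n}\big)B(x)$ is a nontrivial factorization inside $\ff_p[M_d]$, contradicting that $a_j$ is an atom. Your step (ii) ``pull-back'' corresponds to this support argument, but without the degree-$\le 2$ irreducibility tower it has no footing; to repair your proposal you would need to replace $1+x$ by such an $f_d$ and choose the second generator of the monoid's denominators accordingly.
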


\medskip
\subsection{Construction of the Monoid of Exponents}

Let us proceed to construct, for each pair $(q,r) \in \pp \times \zz_{\ge 2}$ with $\gcd(q,r) = 1$, an atomic Puiseux monoid that we will use later as exponent monoids of certain monoid algebras.

\begin{prop} \label{prop:atomic PMs}
	For every pair $(q,r) \in \pp \times \zz_{\ge 2}$, there exists a Puiseux monoid $M_{q,r}$ satisfying the following properties:
	\begin{enumerate}
		\item $\nn_0\big[\frac1q\big] \subseteq M_{q,r} \subseteq \zz\big[\frac1q, \frac1r\big]$, and
		\smallskip
		
		\item $M_{q,r}$ is atomic.
	\end{enumerate}
\end{prop}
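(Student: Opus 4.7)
The plan is to construct $M_{q,r}$ explicitly as a Puiseux monoid, enlarging $\nn_0[\tfrac 1q]$ by a countable family of rationals in $\zz[\tfrac 1q, \tfrac 1r]_{\ge 0}$ that will serve as its atoms. Since $\nn_0[\tfrac 1q]$ is antimatter (every positive element is arbitrarily divisible by $q$ in $\nn_0[\tfrac 1q]$), any atom of $M_{q,r}$ must lie outside $\nn_0[\tfrac 1q]$, hence must have a prime divisor of $r$ in its denominator. Accordingly, I would take generators of the form
\[
	a_n \;=\; \frac{s_n}{r\,q^{e_n}},
\]
with $\gcd(s_n,qr)=1$ and $(e_n)_{n\ge 1}$ a strictly increasing sequence of nonnegative integers, and set
\[
	M_{q,r} \;:=\; \nn_0\!\left[\tfrac 1q\right] \,+\, \sum_{n\ge 1} \nn_0\cdot a_n,
\]
so that the containments in (1) hold by construction.

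For atomicity, I would pin down $(s_n, e_n)$ so that two conditions hold at once: \emph{(a)} each $a_n$ is irreducible in $M_{q,r}$, and \emph{(b)} every $1/q^k$ belongs to the submonoid $\langle a_n : n \ge 1\rangle$ generated by the $a_n$'s alone---so that $\nn_0[\tfrac 1q]\subseteq \langle a_n\rangle$, making $M_{q,r} = \langle a_n\rangle$ a monoid generated by atoms and hence atomic. Condition~\emph{(b)} is enabled by the key identity $r\,a_n = s_n/q^{e_n} \in \nn_0[\tfrac 1q]$, which places $r$ copies of each atom back into $\nn_0[\tfrac 1q]$; by choosing the $s_n$'s so that the $\nn_0$-span of $\{s_n\,q^{k-e_n} : e_n \le k\}$ reaches every positive integer multiple of $r$, one can ensure each $1/q^k$ is a positive integer combination of atoms. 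Condition~\emph{(a)} would follow by unpacking any alleged nontrivial decomposition $a_n = X + \sum_m C_m a_m$ with $X \in \nn_0[\tfrac 1q]$ and $C_m \in \nn_0$: reducing modulo $r\,\zz[\tfrac 1q] \cong \zz/r$ yields the congruence $s_n q^{-e_n} \equiv \sum_m C_m s_m q^{-e_m} \pmod r$, which together with the size constraint $\sum_m C_m a_m \le a_n$ and a careful choice of residues $s_n \bmod r$ and exponent gaps $e_m - e_n$ forces $C_n = 1$ with all other $C_m$ and $X$ equal to zero.

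The hard part will be balancing \emph{(a)} and \emph{(b)}: introducing more atoms to serve \emph{(b)} may create new decompositions that violate \emph{(a)} for previously chosen atoms. My plan is a recursive construction in the spirit of Grams: at stage $n$, pick $(s_n, e_n)$ first so that $1/q^n$ becomes a positive $\nn_0$-combination of $a_1, \dots, a_n$, and second so that the residue $s_n q^{-e_n} \bmod r$ avoids the finitely many congruences that would render any $a_i$ with $i<n$ reducible against $a_n$. Both sets of constraints amount to a finite system of residue conditions on $s_n$ coprime to $qr$, together with a lower bound on $e_n$, so such a choice always exists. This directly generalizes---from $(q,r) = (2,3)$ to arbitrary coprime pairs in $\pp \times \zz_{\ge 2}$---the construction carried out in \cite[Proposition~5.1]{CG19}, and the verification of atomicity of the resulting $M_{q,r}$ proceeds along the same lines.
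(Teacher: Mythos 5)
There is a fatal gap: your ansatz $a_n = \frac{s_n}{r\,q^{e_n}}$ with $\gcd(s_n,qr)=1$, i.e.\ with only the \emph{first} power of $r$ in every denominator, can never yield an atomic monoid containing $\nn_0\big[\tfrac1q\big]$ --- in fact it forces $M_{q,r}$ to have no atoms at all, so no choice of residues $s_n \bmod r$ and exponent gaps can make step \emph{(a)} work. Indeed, your condition \emph{(b)} forces generators of arbitrarily small size, so for any fixed $n$ there is some $a_M$ with $a_M < a_n/r$. Since $q$ and $s_M$ are units modulo $r$, the congruence $C\,s_M q^{-e_M} \equiv s_n q^{-e_n} \pmod r$ has a solution $C \in \ldb 1, r-1\rdb$ (namely $C \equiv s_n s_M^{-1} q^{e_M - e_n}$), and then $a_n - C a_M$ is a \emph{positive} element of $\zz\big[\tfrac1q\big]$, hence lies in $\nn_0\big[\tfrac1q\big] \subseteq M_{q,r}$. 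Thus $a_n = \big(a_n - C a_M\big) + C a_M$ is a nontrivial decomposition of $a_n$. Your mod-$r$ reduction only produces one congruence in $\zz/r$, and because every relevant residue is a unit, that congruence is always solvable with a coefficient at most $r-1$; the size constraint is then vacuous once smaller generators appear, which they must. So the obstruction is not a finite system of residue conditions that can be dodged at stage $n$: every later stage re-opens it, and since atoms of a reduced monoid must belong to every generating set (and elements of $\nn_0\big[\tfrac1q\big]$ are never atoms, as you note), the resulting monoid is antimatter, not atomic.

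The missing idea is that the denominators of the prospective atoms must carry \emph{unbounded, rapidly growing} powers of $r$. The paper takes pairs $a_n = \frac{q^n r^{\ell_n}-1}{2q^{2n}r^{\ell_n}}$ and $b_n = \frac{q^n r^{\ell_n}+1}{2q^{2n}r^{\ell_n}}$ with $r^{\ell_n-\ell_{n-1}} > 2q^{n+1}$; the identity $a_n + b_n = \frac1{q^n}$ gives your condition \emph{(b)} for free, and minimality of the generating set is proved by clearing denominators in an alleged redundant representation: the coefficient discrepancy at the largest index is forced to be divisible by $r^{\ell_m - \ell_{m-1}}$, which makes the right-hand side exceed $1$ --- the size bound has teeth precisely because the forced divisibility is by a huge power of $r$, not merely by $r$. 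Note also that \cite[Proposition~5.1]{CG19} uses growing powers of $3$ in the same way, so your single-power-of-$r$ scheme is not a direct generalization of that construction; and the coprimality $\gcd(q,r)=1$, which you assume, is indeed needed (and is tacitly in force in the paper's proof).
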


\begin{proof}
	Let $(\ell_n)_{n \ge 1}$ be a strictly increasing sequence of positive integers such that $r^{\ell_n - \ell_{n-1}} > 2q^{n+1}$ for every $n \in \nn$, and let $L$ denote the underlying set of $(\ell_n)_{n \ge 1}$. For each $n \in \nn$, set
	\[
		a_n := \frac{q^n r^{\ell_n} - 1}{2 q^{2n} r^{\ell_n}} \quad \text{ and } \quad  b_n := \frac{q^n r^{\ell_n} + 1}{2 q^{2n} r^{\ell_n}}.
	\]
	Clearly, $1 > b_n > a_n$ for every $n \in \nn$. In addition, after replacing in the expression $\frac1{2 q^{2n}r^{\ell_n}} + \frac1{2q^{2n+2}r^{\ell_{n+1}}}$ both $q^{2n}$ and $q^{2n+2}$ by $q^{n+1}$ and both $r^{\ell_n}$ and $r^{\ell_{n+1}}$ by $2$, we see that $\frac{q-1}{2q^{n+1}} > \frac1{2 q^{2n}r^{\ell_n}} + \frac1{2q^{2n+2}r^{\ell_{n+1}}}$, from which one obtains the following:
	\[
		a_n = \frac1{2q^n} - \frac1{2q^{2n}r^{\ell_n}} = \frac1{2q^{n+1}} + \bigg( \frac{q-1}{2q^{n+1}} - \frac1{2q^{2n}r^{\ell_n}} \bigg) > \frac1{2q^{n+1}} + \frac1{2q^{2n+2}r^{\ell_{n+1}}} = b_{n+1}.
	\]
	Thus, $a_n > b_{n+1}$ for every $n \in \nn$, and so the sequence $b_1, a_1, b_2, a_2, \ldots$ is a strictly decreasing sequence of rationals in $(0,1)$. Now let $M := M_{q,r}$ denote the Puiseux monoid generated by the set
	\[
		A := \{a_n, b_n : n \in \nn\}.
	\]
	As $\frac1{q^n} = a_{n} + b_{n} \in M$ for every $n \in \nn$, the inclusion $\nn_0\big[\frac1q\big] \subseteq M_{q,r}$ holds. On the other hand, the fact that $A$ is a subset of $ \zz\big[\frac1q, \frac1r\big]$ ensures that $M_{q,r} \subseteq \zz\big[\frac1q, \frac1r\big]$. Hence $M$ satisfies property~(1). 
	\smallskip
	
	Therefore we are done once we argue that $M$ is atomic, which amounts to proving that $\mathcal{A}(M) = A$. Since $M$ is reduced, we only need to check that $A$ is a minimal generating set of $M$. We suppose, by way of contradiction, that $A$ is not a minimal generating set of~$M$, and split the rest of the proof into the following cases.
	\smallskip
	
	\textsc{Case 1:} $M = \langle A \setminus \{a_n\} \rangle$ for some $n \in \nn$. Since $a_n = \min \{a_i, b_i : i \in \ldb 1,n \rdb\}$, we see that $b_n \nmid_M a_n$ and $a_i, b_i \nmid_M a_n$ for any $i \in \ldb 1,n-1 \rdb$. This, along with the fact that $A \setminus \{a_n\}$ is a generating set of $M$, allows us to pick an index $N \in \nn$ with $N > n$ and  such that
	\begin{equation} \label{eq:expression for a_n for case 1}
		a_n = \sum_{i=n+1}^N \alpha_i a_i  + \sum_{i=n+1}^N \beta_i b_i
	\end{equation}
	for some coefficients $\alpha_{n+1}, \dots, \alpha_N \in \nn_0$ and $\beta_{n+1}, \dots, \beta_N \in \nn_0$. We can further assume that $\alpha_N \neq 0$ or $\beta_N \neq 0$. Observe that $\alpha_i \neq \beta_i$ for some $i \in \ldb n+1, N \rdb$ as otherwise
	\[
		a_n = \sum_{i=n+1}^N \alpha_i (a_i + b_i) = \sum_{i=n+1}^N \alpha_i \frac1{q^i},
	\]
	 which is not possible because $\gcd(q,r) = 1$ and $r \mid \mathsf{d}(a_n)$. Therefore we can set
	 \begin{equation} \label{eq:maximum index m}
	 	m := \max\{i \in \ldb n+1,N \rdb : \alpha_i \neq \beta_i \}.
	 \end{equation}
	 Assume first that $\alpha_m > \beta_m$. Then we can rewrite~\eqref{eq:expression for a_n for case 1} as follows:
	 \begin{equation} \label{eq:modified expression for a_n for case 1}
	 	a_n = (\alpha_m - \beta_m) \frac{q^m r^{\ell_m} - 1}{2q^{2m}r^{\ell_m}} + \sum_{i=m}^N \beta_i \frac1{q^i} + \sum_{i=n+1}^{m-1} \frac{\alpha_i(q^i r^{\ell_i} - 1) + \beta_i(q^i r^{\ell_i} + 1)}{2q^{2i}r^{\ell_i}}.
	 \end{equation}
	After multiplying both sides of~\eqref{eq:modified expression for a_n for case 1} by $2q^{2N}r^{\ell_m}$, one can readily see that each summand involved in the resulting equality except perhaps $q^{2N - 2m}(\alpha_m - \beta_m)(q^m r^{\ell_m} - 1)$ is divisible by $r^{\ell_m - \ell_{m-1}}$. As a consequence, $\alpha_m - \beta_m$ must also be divisible by $r^{\ell_m - \ell_{m-1}}$. Therefore $\alpha_m - \beta_m \ge r^{\ell_m - \ell_{m-1}}$, and so we can use the inequalities $a_m > b_{m+1} > \frac1{2q^{m+1}}$ to obtain that
	\begin{equation} \label{eq:equality contradiction}
		a_n \ge \alpha_m a_m \ge (\alpha_m - \beta_m) b_{m+1} \ge r^{\ell_m - \ell_{m-1}}b_{m+1} > \frac{r^{\ell_m - \ell_{m-1}}}{2q^{m+1}} > 1,
	\end{equation}
	which is a contradiction. In a completely similar manner, we can produce a contradiction after assuming that $\beta_m > \alpha_m$.
	\smallskip
	
	\textsc{Case 2:} $M = \langle A \setminus \{b_n\} \rangle$ for some $n \in \nn$. As we did with $a_n$ in Case~1, in this case we can write $b_n - \alpha_n a_n$ in the following way:
	\begin{equation} \label{eq:expression for a_n for case 2}
		b_n - \alpha a_n = \sum_{i=n+1}^N \alpha_i a_i  + \sum_{i=n+1}^N \beta_i b_i,
	\end{equation}
	for some index $N \in \nn$ with $N > n$ and coefficients $\alpha_{n+1}, \dots, \alpha_N \in \nn_0$ and $\beta_{n+1}, \dots, \beta_N \in \nn_0$ such that either $\alpha_N > 0$ or $\beta_N > 0$. Since $2a_n > b_n$, we see that $\alpha_n \in \{0,1\}$. As in the previous case, we can take the index $m$ as in~\eqref{eq:maximum index m}. First, assume that $\alpha_m > \beta_m$ and then write
		\begin{equation} \label{eq:modified expression for a_n for case 2}
		b_n - \alpha a_n = (\alpha_m - \beta_m) \frac{q^m r^{\ell_m} - 1}{2q^{2m}r^{\ell_m}} + \sum_{i=m}^N \beta_i \frac1{q^i} + \sum_{i=n+1}^{m-1} \frac{\alpha_i(q^i r^{\ell_i} - 1) + \beta_i(q^i r^{\ell_i} + 1)}{2q^{2i}r^{\ell_i}}.
	\end{equation}
	Since $\mathsf{d}(b_n - \alpha_n a_n) \in \{ q^{2n} r^{\ell_n}, 2q^{2n} r^{\ell_n} \}$, after multiplying both sides of~\eqref{eq:modified expression for a_n for case 2} by $2q^{2N} r^{\ell_m}$ we will obtain that $\alpha_m - \beta_m$ is divisible by $r^{\ell_m - \ell_{m-1}}$, and an argument similar to that in~\eqref{eq:equality contradiction} can be used to produced the desired contradiction. In a similar way, we can obtain a contradiction after assuming the inequality $\alpha_m < \beta_m$.
	\smallskip
	
	Having obtained a contradiction in each of the analyzed cases, we can assure now that $A$ is a minimal generating set of $M$, which means that $M$ is an atomic monoid with $\mathcal{A}(M) = A$.
\end{proof}

\medskip
\subsection{The Ascent of Quasi-Atomicity}

The main purpose of this section is to prove that the property of being quasi-atomic does not ascend to monoid algebras over fields. Before establishing our desired result, which is Theorem~\ref{thm:non-ascent of QA in characteristic p}, we need some basics on irreducibility of certain polynomials over finite fields. We start with a criterion for irreducibility of binomials over finite fields.

\begin{theorem}  \cite[Theorem~3.75]{LN94} \label{thm:irreducibility of binomials}
	Let $q$ be a power of a prime, and let $a \in \ff_q^* $ be an element with order~$e$. Then, for each $t \in \nn$ with $t \ge 2$, the binomial $x^t - a$ is irreducible in $\ff_q[x]$ if and only if $t$ satisfies the following conditions:
	\begin{enumerate}
		\item[(i)] $\gcd\big(t, \frac{q-1}e) = 1$,
		\smallskip
		
		\item[(ii)] each prime factor of $t$ divides $e$, and
		\smallskip
		
		\item[(iii)] if $4 \mid t$, then $4 \mid q-1$.
	\end{enumerate}
\end{theorem}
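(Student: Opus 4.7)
The plan is to deduce the criterion from Capelli's classical irreducibility theorem for binomials over an arbitrary field and then translate the resulting abstract hypotheses into arithmetic conditions on $t$, $e$, and $q$ via explicit computation in the cyclic group $\ff_q^*$.

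First I would state (and prove by induction on $t$) the following version of Capelli's theorem: for any field $k$, any $a \in k^*$, and any integer $t \ge 2$, the binomial $x^t - a$ is irreducible in $k[x]$ if and only if
\begin{enumerate}
\item[(C1)] $a \notin k^p$ for every prime $p$ dividing $t$, and
\item[(C2)] if $4 \mid t$, then $-4a \notin k^4$.
\end{enumerate}
The inductive proof reduces to the prime-power case $t = p^n$; the essential step is the prime case $t = p$, where one picks a root $\alpha$ of $x^p - a$ in an algebraic closure, notes that every root has the form $\zeta \alpha$ with $\zeta^p = 1$, and uses the shape of the constant term of any hypothetical proper factor together with a norm argument to force $a$ to be a $p$-th power in $k$. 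The anomalous clause (C2) arises from the Sophie Germain identity
\[
X^4 + 4 Y^4 = (X^2 - 2XY + 2Y^2)(X^2 + 2XY + 2Y^2),
\]
which shows that $x^4 + 4c^4$ factors over any field even when $-4c^4$ is not a fourth power, so an extra condition is unavoidable whenever $4 \mid t$.

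Next I would specialize $k = \ff_q$. Since $\ff_q^*$ is cyclic of order $q-1$, the subgroup $(\ff_q^*)^p$ of $p$-th powers has index $\gcd(p, q-1)$, whence
\[
a \in (\ff_q^*)^p \iff a^{(q-1)/\gcd(p, q-1)} = 1 \iff e \;\Big|\; \frac{q-1}{\gcd(p, q-1)}.
\]
Splitting on whether $p \mid q-1$ or not, one finds that $a \notin (\ff_q^*)^p$ is equivalent to the pair of conditions ``$p \mid e$ and $p \nmid (q-1)/e$''. Requiring this for every prime $p$ dividing $t$ yields exactly condition (ii) (every prime factor of $t$ divides $e$) together with condition (i) ($\gcd(t, (q-1)/e) = 1$), so (C1) translates cleanly into (i) and (ii).

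Finally, assuming $4 \mid t$ together with (i) and (ii) applied at $p = 2$, one obtains $v_2(e) = v_2(q-1) \ge 1$, so $q$ is odd. I would then show (C2) $\iff$ (iii), by case analysis on $q \bmod 4$. When $q \equiv 3 \pmod 4$, the index of $(\ff_q^*)^4$ is $\gcd(4, q-1) = 2$, so fourth powers coincide with squares; a direct computation using the non-squareness of $-1$ and of $a$ (since $v_2(e) = 1$) shows that $-a$ is always a square, whence $-4a$ is always a fourth power and both (C2) and (iii) fail. When $q \equiv 1 \pmod 4$, so $4 \mid q-1$ and $v_2(e) \ge 2$, an analogous computation in the explicit generator-description of $(\ff_q^*)^4$ shows that $-4a$ is never a fourth power, so both (C2) and (iii) hold. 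The main obstacle is the delicate handling of the $4 \mid t$ clause: proving (C2) inside Capelli's theorem requires the Sophie Germain identity and a careful analysis of how $x^4 - a$ can split into two quadratics, and the subsequent translation of (C2) into (iii) requires the case split on $q \bmod 4$ together with the generator-level bookkeeping above. The rest is routine cyclic-group arithmetic.
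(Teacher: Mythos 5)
The paper does not prove this statement: it is quoted, with attribution, as Theorem~3.75 of Lidl and Niederreiter \cite[Theorem~3.75]{LN94} and is used as a black box in the proof of Corollary~\ref{cor:irreducibility of binomials}. So there is no internal proof to compare yours against, and I can only assess your argument on its own terms.

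Your route --- deduce the criterion from Capelli's irreducibility theorem for $x^t - a$ over an arbitrary field and then translate its two hypotheses into arithmetic in the cyclic group $\ff_q^*$ --- is sound, and the translation is carried out correctly. The equivalence of (C1) with conditions (i) and (ii), via ``$a \notin (\ff_q^*)^p$ if and only if $p \mid e$ and $p \nmid (q-1)/e$,'' checks out, as does the case analysis on $q \bmod 4$ reducing (C2) to (iii): when $q \equiv 3 \pmod 4$ fourth powers coincide with squares and $-4a$ is always a square (being the product of the two non-squares $-1$ and $a$ with the square $4$), while when $q \equiv 1 \pmod 4$ the exponent of $-4a$ with respect to a generator is odd, so $-4a$ is not even a square. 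Two caveats. First, essentially all of the difficulty lives in Capelli's theorem, which you only sketch; a complete writeup would need the full inductive argument (the prime case plus the $4 \mid t$ anomaly), and that is roughly as long as everything else combined. Second, your formulation of (C2) as ``$-4a \notin k^4$'' is equivalent to the standard ``$a \notin -4k^4$'' only in characteristic different from $2$: in characteristic $2$ one has $-4a = 0 \in k^4$, so your version of Capelli would wrongly declare, say, $x^4 - s$ reducible over $\ff_2(s)$, where it is in fact irreducible (Eisenstein at $s$). This slip does not affect the finite-field application, since for $q$ even condition (C1) at $p = 2$ already fails, but the auxiliary lemma as you state it is false in general and should either carry a characteristic hypothesis or be phrased as $a \notin -4k^4$.
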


From the previous theorem, we can deduce the following corollary.

\begin{cor} \label{cor:irreducibility of binomials}
	Let $q$ be a prime such that $q \equiv 1 \pmod{4}$. If $a \in \ff_q^\times$ is a primitive root modulo $q$, then the binomial $x^{(q-1)^n} - a$ is irreducible in $\ff_q[x]$ for every $n \in \nn$.
\end{cor}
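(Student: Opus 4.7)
The plan is to apply Theorem~\ref{thm:irreducibility of binomials} directly with $t := (q-1)^n$ and with $e$ equal to the order of $a$ in $\ff_q^\times$. Since $a$ is assumed to be a primitive root modulo $q$, its order is $e = q-1$. I first need to check the mild hypothesis $t \ge 2$ of the cited theorem: because $q$ is a prime with $q \equiv 1 \pmod 4$, we have $q \ge 5$, so $t = (q-1)^n \ge 4 \ge 2$ for every $n \in \nn$.

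Next, I would verify the three conditions (i)--(iii) in turn. For condition (i), $\tfrac{q-1}{e} = 1$ (because $e = q-1$), so $\gcd\!\bigl(t,\tfrac{q-1}{e}\bigr) = 1$ is automatic. For condition (ii), every prime factor of $t = (q-1)^n$ is already a prime factor of $q-1 = e$, so the required divisibility holds trivially. For condition (iii), the standing hypothesis $q \equiv 1 \pmod 4$ gives $4 \mid q-1$ directly, so the implication ``$4 \mid t \Rightarrow 4 \mid q-1$'' is vacuously satisfied no matter what $n$ is.

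Having established all three conditions, Theorem~\ref{thm:irreducibility of binomials} yields at once that $x^{(q-1)^n} - a$ is irreducible in $\ff_q[x]$, completing the proof. There is really no substantive obstacle here; the corollary is a routine specialization of the theorem, and the role of the hypothesis $q \equiv 1 \pmod 4$ is precisely to discharge condition (iii) once and for all (in particular for the values of $n$ with $4 \mid (q-1)^n$), while the primitivity of $a$ is what forces $e = q-1$ and thereby trivializes (i) and (ii).
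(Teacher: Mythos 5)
Your proposal is correct and follows essentially the same route as the paper: a direct verification of conditions (i)--(iii) of Theorem~\ref{thm:irreducibility of binomials} with $t=(q-1)^n$ and $e=q-1$ coming from primitivity of $a$. Your handling of condition (iii) --- invoking $q\equiv 1\pmod 4$ directly, together with the explicit check that $t\ge 2$ --- is in fact slightly cleaner than the paper's case split on $n$, but it is not a substantively different argument.
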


\begin{proof}
	Let $a \in \ff_q^\times$ be a primitive root modulo $q$. It suffices to verify the three conditions in the statement of Theorem~\ref{thm:irreducibility of binomials} for $t = (q-1)^n$. First, observe that the order of $a$ in $\ff_q$ is $e = q-1$, so condition~(i) holds. As the prime factors of $(q-1)^n$ and $q-1$ are the same, condition~(ii) also holds. Finally, if $4 \mid t$, then $4 \mid q-1$ follows trivially when $n=1$ while $4 \mid q-1$ follows from $2 \mid q$ when $n \ge 2$. Hence the binomial $x^{(q-1)^n} - a$ is irreducible in $\ff_q[x]$ by virtue of Theorem~\ref{thm:irreducibility of binomials}.
\end{proof}

Let us turn our attention to another criterion for irreducibility of certain trinomials over finite fields. 

\begin{theorem}  \cite[Theorem~3.3]{BGMVY93} \label{thm:irreducibility of trinomials}
	Let $p$ be a prime such that $p \equiv 3 \pmod{4}$ and write $p+1 = 2^\gamma s$ for some $\gamma, s \in \nn$ with $s$ being odd. Then, for any $k \in \nn$, the trinomial $x^{2^k} - 2ax^{2^{k-1}} - 1$ is irreducible in $\ff_p[x]$ (and so irreducible over $\ff_{p^m}$ for any odd integer $m$), where $a = a_\gamma$ is obtained recursively as follows:
	 \begin{enumerate}
	 	\item[(i)] set $a_1 = 0$;
	 	\smallskip
	 	
	 	\item[(ii)] for $j \in \ldb 2, \gamma-1 \rdb$, set $a_j = \big( \frac{a_{j-1} + 1}2 \big)^{\frac{p+1}4}$;
	 	\smallskip
	 	
	 	\item[(iii)] finally, set $a_\gamma = \big( \frac{a_{\gamma-1} - 1}2 \big)^{\frac{p+1}4}$.
	 \end{enumerate}
\end{theorem}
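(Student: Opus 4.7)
The plan is to reduce the irreducibility of $T(x) := x^{2^k} - 2ax^{2^{k-1}} - 1$ over $\ff_p$ to two tasks: (a) a multiplicative-order computation in $\ff_{p^2}^\times$, and (b) the verification that $a^2 + 1$ is a non-square in $\ff_p^\times$. The recursive definition of $a = a_\gamma$ is designed precisely to force (b).

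I would begin by substituting $y = x^{2^{k-1}}$ to view $T$ as the quadratic $y^2 - 2ay - 1$ in $y$. Over $\overline{\ff_p}$ this factors as $(y - \alpha)(y - \beta)$ with $\alpha + \beta = 2a$ and $\alpha\beta = -1$, where $\alpha, \beta = a \pm \omega$ and $\omega^2 = a^2 + 1$. Hence
\[
T(x) = (x^{2^{k-1}} - \alpha)(x^{2^{k-1}} - \beta).
\]
Granting (b), one has $\omega \in \ff_{p^2} \setminus \ff_p$, and since $\omega^p = \omega \cdot (\omega^2)^{(p-1)/2} = \omega \cdot \chi(a^2 + 1) = -\omega$, the Frobenius sends $\alpha$ to $\beta$. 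Consequently $\alpha^{p+1} = \alpha\beta = -1$.

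Writing $p + 1 = 2^\gamma s$ with $s$ odd, the equality $\alpha^{p+1} = -1$ forces $\mathrm{ord}(\alpha) \mid 2(p+1)$ and $\mathrm{ord}(\alpha) \nmid p+1$, giving $v_2(\mathrm{ord}(\alpha)) = \gamma + 1$. Since $p \equiv 3 \pmod 4$, one has $v_2(p^2 - 1) = v_2(p-1) + v_2(p+1) = 1 + \gamma$, so $(p^2-1)/\mathrm{ord}(\alpha)$ is odd. I would now apply Theorem~\ref{thm:irreducibility of binomials} to the binomial $x^{2^{k-1}} - \alpha$ over $\ff_{p^2}$: condition (i) holds since $(p^2-1)/\mathrm{ord}(\alpha)$ is odd; (ii) holds since $v_2(\mathrm{ord}(\alpha)) \geq 2$; (iii) is automatic because $p^2 \equiv 1 \pmod 8$. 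Hence $x^{2^{k-1}} - \alpha$, and by Galois conjugation also $x^{2^{k-1}} - \beta$, is irreducible over $\ff_{p^2}$. For any root $\zeta$ of $T$ this yields $[\ff_p(\zeta) : \ff_p] = 2 \cdot 2^{k-1} = 2^k = \deg T$, so $T$ is irreducible over $\ff_p$. The extension to $\ff_{p^m}$ for odd $m$ follows because $\gcd(m, 2^k) = 1$ gives $\ff_{p^m} \cap \ff_{p^{2^k}} = \ff_p$, so $[\ff_{p^m}(\zeta):\ff_{p^m}] = 2^k$ as well.

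The main obstacle is establishing (b), which is the technical heart of the theorem and the reason for the intricate recursion. My approach would be an induction along $a_1, a_2, \ldots, a_\gamma$ using the identity $c^{(p+1)/2} = c \cdot \chi(c)$ for $c \in \ff_p^\times$. Each interior step $a_j = ((a_{j-1}+1)/2)^{(p+1)/4}$ then yields $a_j^2 = \chi((a_{j-1}+1)/2) \cdot (a_{j-1}+1)/2$, while the final step gives $a_\gamma^2 = \chi((a_{\gamma-1}-1)/2) \cdot (a_{\gamma-1}-1)/2$. The plus-versus-minus discrepancy at the last step is precisely what flips the Legendre symbol of $a_\gamma^2 + 1$ relative to the pattern built up from $a_1 = 0$. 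I would show inductively that the interior iterates $(a_{j-1}+1)/2$ are nonzero squares (keeping the recursion well-defined and the character cancellations composable), and that the resulting telescoping chain, together with $\chi(-1) = -1$ (since $p \equiv 3 \pmod 4$) and tracking of $\chi(2)$, collapses to $\chi(a_\gamma^2 + 1) = -1$. This character bookkeeping is where I expect the bulk of the work to lie.
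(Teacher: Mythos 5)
The paper offers no proof of this statement to compare against: it is imported verbatim from \cite[Theorem~3.3]{BGMVY93} and used as a black box in Section~5. Judged on its own, the reduction you set up is correct and essentially complete \emph{modulo your claim (b)}: factoring $T(x)=(x^{2^{k-1}}-\alpha)(x^{2^{k-1}}-\beta)$ with $\alpha\beta=-1$, deducing $\alpha^{p+1}=-1$ from the Frobenius swap, computing that the order of $\alpha$ has $2$-adic valuation $\gamma+1=v_2(p^2-1)$, and feeding this into Theorem~\ref{thm:irreducibility of binomials} over $\ff_{p^2}$ all check out (you should add the one-line remark that $\alpha=\zeta^{2^{k-1}}\in\ff_p(\zeta)\setminus\ff_p$, which is what upgrades $[\ff_{p^2}(\zeta):\ff_{p^2}]=2^{k-1}$ to $[\ff_p(\zeta):\ff_p]=2^k$, and note that the case $k=1$ bypasses the binomial theorem entirely and is settled by (b) alone). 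But (b) --- that $a_\gamma^2+1$ is a nonsquare in $\ff_p^\times$ --- is the entire content of the theorem; the recursion exists only to manufacture it, and you leave it as a sketch. That is a genuine gap, not an omitted routine verification.

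Moreover, the sketch as described cannot close. Writing $c_j:=\frac{a_{j-1}+1}{2}$ and supposing inductively that $\chi(c_j)=1$, so $a_j^2=c_j$, the only relation your bookkeeping produces at the next level is the product formula $\chi\big(\frac{a_j+1}{2}\big)\chi\big(\frac{a_j-1}{2}\big)=\chi\big(\frac{a_j^2-1}{4}\big)=\chi\big(\frac{a_{j-1}-1}{2}\big)$: one equation in the two unknown characters $\chi\big(\frac{a_j\pm1}{2}\big)$, both of which you need (the plus one to continue the interior recursion, the minus one at the final step). Already for $\gamma=3$ one computes $\chi\big(\frac{a_2+1}{2}\big)\chi\big(\frac{a_2-1}{2}\big)=\chi(-1/8)=-1$, so exactly one of the two is a square, and the proof must show it is $\frac{a_2-1}{2}$; no combination of $\chi(-1)$, $\chi(2)$ and earlier character values decides which, because the answer depends on \emph{which} square root of $c_2$ the map $c\mapsto c^{(p+1)/4}$ selects. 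The missing idea is a finer inductive invariant at the level of elements rather than characters: track $\alpha_j=a_j+\omega_j\in\ff_{p^2}$ and show the recursion realizes $\alpha_j^2=\pm\alpha_{j-1}$, so that $\alpha_\gamma$ has order with $2$-part exactly $2^{\gamma+1}$ (a square-root climb up the $2$-Sylow subgroup of $\ff_{p^2}^\times$), from which the nonsquareness of $a_\gamma^2+1$ follows; this is, in substance, what Blake et al.\ do. Without some such strengthening, your induction hypothesis does not propagate.
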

%
%

We are in a position to establish the primary result of this section, namely, that the property of being quasi-atomic does not ascend to monoid algebras over fields in the class of rank-one torsion-free monoids.

\begin{theorem} \label{thm:non-ascent of QA in characteristic p}
	For each $p \in \pp$, there exists a rank-one torsion-free atomic monoid $M$ such that the monoid algebra $\ff_p[M]$ is not quasi-atomic.
\end{theorem}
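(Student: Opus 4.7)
The plan is to generalize the construction given in \cite[Section~5]{CG19} from the case $p = 2$ to arbitrary primes. For each $p \in \pp$, I will choose an auxiliary integer $r \ge 2$ coprime to $p$, with its residue modulo $4$ selected to match the hypotheses of either Corollary~\ref{cor:irreducibility of binomials} (when $p \equiv 1 \pmod{4}$) or Theorem~\ref{thm:irreducibility of trinomials} (when $p \equiv 3 \pmod{4}$); the case $p = 2$ is handled essentially as in \cite{CG19}, with a minor adaptation to upgrade the conclusion from non-atomicity to non-quasi-atomicity. Setting $M := M_{p, r}$ as produced by Proposition~\ref{prop:atomic PMs}, we obtain a rank-one torsion-free atomic monoid with $\nn_0[\tfrac{1}{p}] \subseteq M \subseteq \zz[\tfrac{1}{p}, \tfrac{1}{r}]_{\ge 0}$ whose atoms are exactly $\{a_n, b_n : n \in \nn\}$, where $a_n + b_n = 1/p^n$.

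To show that $\ff_p[M]$ is not quasi-atomic, the witness will be the polynomial expression $f := 1 + x \in \ff_p[M]$; the claim is that $fg$ is not atomic in $\ff_p[M]$ for any nonzero $g \in \ff_p[M]$. Since $\tfrac{1}{p^n} \in M$ for every $n \in \nn$, the Frobenius identity in characteristic $p$ yields
\[
    1 + x = \bigl(1 + x^{1/p^n}\bigr)^{p^n}
\]
in $\ff_p[M]$. Because $M$ is reduced, \eqref{eq:group of units of a semidomain} gives $\ff_p[M]^\times = \ff_p^\times$, whence each $1 + x^{1/p^n}$ is a nonunit that properly divides $1 + x^{1/p^{n-1}}$. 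In particular, no such element is an atom of $\ff_p[M]$, and the ideals $(1 + x^{1/p^n})_{n \ge 0}$ form a strictly ascending chain above $(1 + x)$. Suppose, toward a contradiction, that $fg = \pi_1 \cdots \pi_k$ is an atomic factorization in $\ff_p[M]$. Fix a finitely generated Puiseux submonoid $N \subseteq M$ whose underlying denominator set contains the supports of $g$ and of each $\pi_i$. After rescaling exponents by a common denominator, $\ff_p[N]$ embeds into an ordinary polynomial ring $\ff_p[y]$, which is a UFD; here the irreducibility criteria of Corollary~\ref{cor:irreducibility of binomials} or Theorem~\ref{thm:irreducibility of trinomials} (as applicable to $p \pmod{4}$) will be invoked to restrict the shapes of the $\pi_i$.

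The main step, and principal obstacle, is then to show that, provided $n$ is chosen large enough relative to $N$, the Frobenius decomposition $(1 + x^{1/p^n})^{p^n} g = \pi_1 \cdots \pi_k$ forces some $\pi_i$ to be an associate of $1 + x^{1/p^n}$, contradicting its irreducibility since $1 + x^{1/p^n} = (1 + x^{1/p^{n+1}})^p$ in $\ff_p[M]$. The delicate aspect here is ruling out that $g$ conspires with the remaining $\pi_j$ to absorb this ``$1 + x^{1/p^n}$-obstruction''; this is where the rapid-growth condition $r^{\ell_n - \ell_{n-1}} > 2p^{n+1}$ on the generators of $M_{p, r}$ plays a crucial role. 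It forces the $r$-adic valuations of the exponents in $g$ to be bounded and, for all sufficiently large $n$, prevents any atom other than a unit multiple of $1 + x^{1/p^n}$ from accommodating the required Frobenius $p^n$-th power. For $p = 2$ this argument specializes to (a strengthening of) the one in \cite[Section~5]{CG19}; for odd primes, the irreducibility criteria over $\ff_p$ play the role performed there by a direct computation in $\ff_2[x]$, making the uniform treatment across all characteristics possible.
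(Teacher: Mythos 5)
Your overall scaffolding (the monoids $M_{q,r}$ of Proposition~\ref{prop:atomic PMs}, rescaling a hypothetical atomic factorization into the UFD $\ff_p[y]$, and the Frobenius identity in characteristic $p$) matches the paper, but the proposal has a genuine gap at exactly the point you flag as the ``main step,'' and the gap comes from your choice of witness. The paper does not use $1+x$; it uses a carefully engineered polynomial $f_d(x)$ of degree $1$ or $2$ (namely $x^2+x+1$ for $p=2$, $x-a$ with $a$ a primitive root for $p\equiv 1 \pmod 4$, and the trinomial of Theorem~\ref{thm:irreducibility of trinomials} for $p\equiv 3\pmod 4$) whose defining feature is that $f_d\big(x^{d^n}\big)$ remains \emph{irreducible} in $\ff_p[x]$ for every $n$. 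That persistent irreducibility is what forces, after rescaling, some atom $a_j$ to absorb a full factor $f_d\big(x^{d^n}\big)$, and then the descent works because the exponents of $f_d\big(x^{1/p^n}\big)$ lie in $\{0,\tfrac1{p^n},\tfrac2{p^n}\}\subseteq M_d$ and a cancellation argument (the paper's Claim) shows the cofactor's exponents also lie in $M_d$, giving a nontrivial factorization of $a_j$ inside $\ff_p[M_d]$. With your witness $1+x$, the rescaled element is $1+y^{(pd)^n}=\big(1+y^{d^n}\big)^{p^n}$, and $1+y^{d^n}$ is in general \emph{reducible} in $\ff_p[y]$ (for $p=2$, $d=3$ it is divisible by $1+y$, for instance). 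Consequently the irreducible factors that get distributed among the rescaled $\pi_i$ need not descend to elements of $\ff_p[M]$ at all -- their exponents have denominators divisible by $d^n$, and nothing in Proposition~\ref{prop:atomic PMs} puts such exponents in $M$ -- so no contradiction with the irreducibility of $\pi_i$ in $\ff_p[M]$ is obtained. Nor is there any reason a single $\pi_i$ must be an associate of $1+x^{1/p^n}$, which is what you would need.

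Two further points. First, the strictly ascending chain of principal ideals generated by the elements $1+x^{1/p^n}$ only shows failure of the ACCP at $1+x$; it does not show that $(1+x)g$ fails to be atomic for every $g$, since atomicity does not require the ACCP, so that paragraph does no real work. Second, the roles you assign to the auxiliary ingredients are off: Corollary~\ref{cor:irreducibility of binomials} and Theorem~\ref{thm:irreducibility of trinomials} do not ``restrict the shapes of the $\pi_i$'' -- in the paper they serve only to \emph{construct} the witness $f_d$ with $f_d\big(x^{d^n}\big)$ irreducible -- and the rapid-growth condition $r^{\ell_n-\ell_{n-1}}>2q^{n+1}$ is used solely to prove that $M_{q,r}$ is atomic (Proposition~\ref{prop:atomic PMs}); it plays no role in ruling out atomicity of products in $\ff_p[M]$, so invoking it to ``prevent other atoms from accommodating the Frobenius power'' is unsubstantiated. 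To repair the argument you should reinstate a witness $f_d$ with the persistent-irreducibility property and carry out the descent step as in the proof of Theorem~\ref{thm:non-ascent of QA in characteristic p}.
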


\begin{proof}
	 First, for each $p \in \pp$, we choose an integer $d \in \nn_{\ge 2}$ that depends on $p$, and then we construct  a pair $(M_d, f_d)$ of a Puiseux monoid $M_d$ and a nonconstant polynomial $f_d(x) \in \ff_p[x]$ with degree at most~$2$ satisfying the following conditions:
	 \begin{enumerate}
	 	\item $M_d$ is atomic, and $\nn_0\big[ \frac1p \big] \subseteq M_d \subseteq \zz\big[ \frac1p, \frac1d \big]$, and
	 	\smallskip
	 	
	 	\item $\deg f_d(x) \in \{1,2\}$ and $f_d\big( x^{d^n} \big)$ is irreducible in $\ff_p[x]$ for every $n \in \nn$.
	 \end{enumerate}
 	\smallskip
 	It is convenient to split the construction of the pairs $(M_d, f_d)$ into the following cases, depending on the residue class of $p$ modulo~$4$.
	\smallskip
	
	\textsc{Case 1:} $p \equiv 2 \pmod{4}$. In this case, $p = 2$, and we set $d := 3$. Now let $M_d$ be the atomic Puiseux monoid $M_{2,3}$ constructed in Proposition~\ref{prop:atomic PMs}, and set $f_d(x) := x^2 + x + 1$. It is known that the polynomial $f_d(x^{3^n}) = x^{2 \cdot 3^n} + x^{3^n} + 1$ is an irreducible polynomial in $\ff_2[x]$ for every $n \in \nn$ (see, for instance, \cite[Lemma~5.3]{CG19}). 
	\smallskip
	
	\textsc{Case 2:} $p \equiv 1 \pmod{4}$. In this case, we set $d := p-1$. Then we let $M_d$ be the atomic Puiseux monoid $M_{p,p-1}$ constructed in Proposition~\ref{prop:atomic PMs}, which contains $\nn_0\big[ \frac1p \big]$ as a submonoid. Now set $f_d(x) = x-a$, where $a \in \ff_p^\times$ is a primitive root modulo~$p$ in $\ff_p$. In light of Corollary~\ref{cor:irreducibility of binomials}, the binomial $x^{(p-1)^n} - a$ is irreducible in $\ff_p[x]$ for every $n \in \nn$. 
	\smallskip
	
	\textsc{Case 3:} $p \equiv 3 \pmod{4}$. In this case, we set $d := 2$. Then we let $M_d$ be the atomic Puiseux monoid $M_{p,2}$ we constructed in Proposition~\ref{prop:atomic PMs}, which contains $\nn_0\big[ \frac1p\big]$ as a submonoid. Also, we set $f_d(x) := x^2 - 2ax - 1$ for some element $a \in \ff_p$ produced as described in the statement of Theorem~\ref{thm:irreducibility of trinomials}. Thus, it follows from the same theorem that the polynomial $f_d\big( x^{2^n}\big)$ is irreducible in $\ff_p[x]$ for every $n \in \nn$ (this implies that $a \neq 0$).
	\smallskip
	
	We proceed to argue that the monoid domain $\ff_p[M_{d}]$ is not even quasi-atomic. Suppose, towards a contradiction, that $\ff_p[M_d]$ is quasi-atomic. In particular, the polynomial $f_d(x)$ is a quasi-atomic element, and so there exist a nonzero $f(x) \in \ff_p[M_d]$ and atoms $a_1(x), \dots, a_\ell(x) \in \ff_p[M_d]$ such that $f(x) f_d(x) = a_1(x) \cdots a_\ell(x)$. In light of condition~(1) above, we can take $n \in \nn$ sufficiently large so that $f\big( x^{(pd)^n}\big)$ and $a_1\big( x^{(pd)^n} \big), \dots, a_\ell\big( x^{(pd)^n} \big)$ are standard polynomials, and so the equality
	\begin{equation}
		f\big( x^{(pd)^n}\big) f_d\big( x^{d^n}\big)^{p^n} = \prod_{i=1}^\ell a_i\big( x^{(pd)^n}\big)
	\end{equation}
	takes place inside the polynomial ring $\ff_p[x]$, which is a UFD. This, along with the fact that $f_d\big( x^{d^n}\big)$ is irreducible in $\ff_p[x]$ (by condition~(2) above), ensures the existence of an exponent $t \in \nn$ and an index $j \in \ldb 1, \ell \rdb$ such that
	\begin{equation} \label{eq:expression for the distinguished atom}
		a_j \big( x^{(pd)^n}\big) = g(x) f_d\big( x^{d^n}\big)^t
	\end{equation}
	for some $g(x) \in \ff_p[x]$ such that $g(x) \mid_{\ff_p[x]} f\big( x^{(pd)^n}\big)$. Now set $G(x) := g(x) f_d\big( x^{d^n}\big)^{t-1} \in \ff_p[x]$, and then take nonzero coefficients $b_1, \dots, b_k \in \ff_p$ and exponents $s_1, \dots, s_k \in \nn_0$ with $s_1 < \cdots < s_k$ such that
	\begin{equation} \label{eq:explicit expression for G}
		G(x) = \sum_{i=1}^k b_i x^{s_i}.
	\end{equation}
	In light of~\eqref{eq:expression for the distinguished atom} and~\eqref{eq:explicit expression for G}, one obtains that $a_j\big( x^{(pd)^n}\big) = f_d\big( x^{d^n}\big) \sum_{i=1}^k b_i x^{s_i}$. Since $\deg f_d(x) \in \{1,2\}$, we can take coefficients $c_0, c_1, c_2 \in \ff_p$ with $c_1 \neq 0$ such that $f_d(x) = c_2x^2 + c_1x + c_0$ (the irreducibility of $f_d(x)$ implies that the coefficient $c_0$ is nonzero, but $c_2 = 0$ when $p \equiv 1 \pmod{4}$). After setting $r_i := \frac{s_i}{(pd)^n}$ for every $i \in \ldb 1,k \rdb$, we can write
	\begin{align} \label{eq:explicit representation of a_j}
		a_j(x) = \big(c_2 x^{\frac2{p^n}} + c_1 x^{\frac1{p^n} } + c_0 \big) \sum_{i=1}^k b_i x^{r_i} = c_2 \sum_{i=1}^k b_i x^{\frac2{p^n} + r_i} + c_1\sum_{i=1}^k b_i x^{\frac1{p^n} + r_i} + c_0 \sum_{i=1}^k b_i x^{r_i}.
	\end{align}
	Before proceeding, we need to argue the following claim.
	\smallskip
	
	\noindent \textsc{Claim.} $r_1, \dots, r_k \in M_d$.
	\smallskip
	
	\noindent \textsc{Proof of Claim.} First, observe that $r_1$ is the order of $a_j(x)$ in $\ff_p[M_d]$, so $r_1 \in M_d$. Then we are done if $k=1$. Assume, therefore, that $k \ge 2$. If $r_2$ belongs to the support of $a_j(x)$, then $r_2 \in M_d$. On the other hand, if $r_2 \notin \text{supp} \, a_j(x)$, then the term $c_0b_2x^{r_2}$ in the rightmost part of~\eqref{eq:explicit representation of a_j} must cancel with either $c_2 b_1x^{\frac2{p^n} + r_1}$ or $c_1 b_1x^{\frac1{p^n} + r_1}$ and, as a result, $r_2 \in \big\{ \frac2{p^n} + r_1, \frac1{p^n} + r_1 \big\} \subseteq M_d$. As $r_1, r_2 \in M_d$, we are done when  $k=2$. Thus, we suppose that $k \ge 3$ and set
	\[
		m := \max \{i \in \ldb 1,k \rdb : r_1, \dots, r_i \in M_d\}.
	\]
	We claim that $m=k$. Assume, by way of contradiction, that $m < k$. Then $r_{m+1} \notin \text{supp} \, a_j(x)$ because of the maximality of $m$, and so there exists $i \in \ldb 1,m \rdb$ such that the term $c_0b_{m+1}x^{r_{m+1}}$ in the rightmost part of~\eqref{eq:explicit representation of a_j} cancels with either $c_2 b_ix^{\frac2{p^n} + r_i}$ or $c_1 b_ix^{\frac1{p^n} + r_i}$. Thus, $r_{m+1} \in \big\{ \frac2{p^n} + r_i, \frac1{p^n} + r_i \big\} \subseteq M_d$, contradicting the maximality of~$m$. Hence the claim is established.
	\smallskip
	
	We can now conclude our proof. In light of the established claim, we see that the polynomial expression $B(x) := \sum_{i=1}^k b_i x^{r_i}$ belongs to $\ff_p[M_d]$. Also, the equality $B(x) = G\big( x^{(pd)^{-n}}\big)$ guarantees that $B(x)$ is not a unit of $\ff_p[M_d]$. Therefore the equality
	\[
		a_j(x) = \big(c_2 x^{\frac2{p^n}} + c_1 x^{\frac1{p^n} } + c_0 \big) \sum_{i=1}^k b_i x^{r_i} = f_d\big( x^{\frac1{p^n}} \big)B(x)
	\]
	contradicts the fact that $a_j(x)$ is irreducible in $\ff_p[M_d]$. Hence we conclude that the monoid domain $\ff_p[M_d]$ is not quasi-atomic.
\end{proof}

As an immediate consequence of Theorem~\ref{thm:non-ascent of QA in characteristic p}, we obtain that none of the properties of almost atomicity and quasi-atomicity ascend to monoid domains over finite fields.

\begin{cor} \label{cor:non-ascent of AA in characteristic p}
	For each $p \in \pp$, there exists a rank-one torsion-free almost atomic (resp., quasi-atomic) monoid $M$ such that the monoid domain $\ff_p[M]$ is not almost atomic (resp., quasi-atomic).
\end{cor}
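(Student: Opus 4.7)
The plan is to derive this corollary as an immediate logical consequence of Theorem~\ref{thm:non-ascent of QA in characteristic p}, exploiting nothing more than the implication hierarchy among the three atomicity notions reviewed in Section~\ref{sec:background}. Recall that in any cancellative commutative monoid, every atom is atomic, every atomic element is almost atomic, and every almost atomic element is quasi-atomic; at the level of monoids this gives the chain
\[
	\text{atomic} \ \Longrightarrow\ \text{almost atomic} \ \Longrightarrow\ \text{quasi-atomic},
\]
whose contrapositive reads: not quasi-atomic implies not almost atomic.

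Fix $p \in \pp$ and let $M$ be the rank-one torsion-free atomic monoid produced by Theorem~\ref{thm:non-ascent of QA in characteristic p}, so that $\ff_p[M]$ fails to be quasi-atomic. Since $M$ is atomic, it is simultaneously almost atomic and quasi-atomic. Thus $M$ serves as a witness on the left-hand side of either version of the corollary: it is almost atomic (for the first statement) and quasi-atomic (for the second). On the right-hand side, the fact that $\ff_p[M]$ is not quasi-atomic gives directly the quasi-atomic version, while the above contrapositive yields that $\ff_p[M]$ is also not almost atomic, giving the almost atomic version.

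There is no real obstacle here, as everything reduces to quoting Theorem~\ref{thm:non-ascent of QA in characteristic p} and invoking the implication chain; the only care needed is to pair each failure on the polynomial side with the correct (weaker) property on the monoid side, which is automatic because atomicity of $M$ is strong enough to imply both almost atomicity and quasi-atomicity. Accordingly, I would present the proof as a single short paragraph noting that the same monoid $M$ from Theorem~\ref{thm:non-ascent of QA in characteristic p} witnesses both statements of the corollary.
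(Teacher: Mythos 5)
Your proposal is correct and matches the paper's intent exactly: the paper states the corollary as an immediate consequence of Theorem~\ref{thm:non-ascent of QA in characteristic p}, and the implicit argument is precisely the implication chain atomic $\Rightarrow$ almost atomic $\Rightarrow$ quasi-atomic applied to the monoid $M$ on one side and its contrapositive applied to $\ff_p[M]$ on the other. Nothing further is needed.
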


\bigskip
\section*{Acknowledgments}

This paper is the result of a collaboration carried out while the authors were part of CrowdMath 2024, a year-long free online program in mathematical research generously hosted by the MIT Mathematics department and the Art of Problem Solving. The authors are grateful to the advisors and organizers of CrowdMath for making this research opportunity possible. While working on this paper, the second author was kindly supported by the NSF under the award DMS-2213323.

\bigskip
\section*{Conflict of Interest Statement}

On behalf of all authors, the corresponding author states that there is no conflict of interest related to this paper.

\bigskip

\end{document}